\documentclass[draft]{amsproc}
\usepackage{amssymb}
\usepackage[english]{babel}
\usepackage{mathrsfs}
\usepackage{euscript}
\usepackage{mathrsfs,color}
\usepackage{tikz}
\usetikzlibrary{positioning}

\newtheorem{thm}{Theorem}[section]
\newtheorem{cor}[thm]{Corollary}
\newtheorem{lem}[thm]{Lemma}
\newtheorem{pro}[thm]{Proposition}
\theoremstyle{definition}
\newtheorem{exa}[thm]{Example}

\numberwithin{equation}{section}

\newcommand*{\cf}{C_{\phi}}

\newcommand*{\D}{\mathrm{d}}
\newcommand*{\hh}{\mathcal H}
\newcommand*{\hn}[1]{\mathsf{h}_{\phi^{#1}}}
\newcommand*{\h}[1]{\mathsf{h}_{#1}}

\newcommand*{\lambdab}{{\boldsymbol\lambda}}

\newcommand*{\B}[1][\hh]{\boldsymbol{B}(#1)}
\newcommand*{\ascr}{\mathscr{A}}
\newcommand*{\bscr}{\mathscr{B}}
\newcommand*{\gammab}{\boldsymbol \gamma}

\newcommand*\nbb{\mathbb{N}}
\newcommand*\rbb{\mathbb{R}}
\newcommand*\C{\mathbb{C}}
\newcommand*\zbb{\mathbb{Z}}

\newcommand*\dom{\mathcal{D}}
\newcommand*\Rng{\mathcal{R}}
\newcommand*\Ker{\mathcal{N}}

\def\RngInf{\operatorname{\Rng^\infty}}

\begin{document}
	
	\title[$m$--isometric composition operators]{$m$--isometric composition operators on directed graphs with one circuit}
	\author{Zenon Jan Jab\l o\'nski}
	\author{Jakub Ko\'smider}
	\address{Instytut Matematyki, Uniwersytet Jagiello\'nski, ul. \L{}ojasiewicza 6, PL-30348 Kra\-k\'ow, Poland}
	\email{zenon.jablonski@im.uj.edu.pl; jakub.kosmider@im.uj.edu.pl}
	\thanks{}
	\dedicatory{}
	
	\subjclass[2010]{Primary 47B33; Secondary 47B37, 47B20}
	\keywords{Composition operators, $m$--isometry,
		complete hyperexpansivity, completion problem}
	
	\begin{abstract}
		The aim of this paper is to investigate
		$m$--isometric composition operators on 
		directed graphs with one circuit. We establish a
		characterization of $m$--isometries and prove that complete
		hyperexpansiveness coincides with $2$--isometricity
		within this class. We
		discuss the $m$--isometric completion problem for
		unilateral weighted shifts and for composition
		operators on directed graphs with one circuit.
		The paper is concluded with an affirmative
		solution of the Cauchy dual subnormality problem
		in the subclass with circuit containing one
		element.
	\end{abstract}
	
	\maketitle
	
	\section{Introduction}
	
	The notion of $m$--isometric operator was introduced by Agler in \cite{agler-1990} and initially investigated by Agler and Stankus in \cite{A-S-1995}.
	Recently, there have been published many papers devoted to problems related to $m$--isometric operators (see \cite{A-L-2016,B-S-2018,B-B-M-P-2019,B-M-N-2016,B-M-N-2010,gu-2017,MC-R-2015}).
	In particular, much attention was paid to $m$--isometric unilateral and bilateral weighted shifts.
	Berm\'udez et al. characterized $m$--isometric unilateral weighted shifts in \cite[Theorem~3.4]{B-M-N-2010}.
	Later on, Abdullah and Le provided convenient characterizations of $m$--isometric unilateral and bilateral
	weighted shifts that bind their sequences of weights with certain real polynomials in one variable of degree at most $m-1$ (see \cite[Theorem~2.1, Theorem~5.2]{A-L-2016}).
	
	Lately,  more general
	classes of operators such as $(m,p)$--isometries
	(see \cite{B-M-N-2016,gu-2017}) have been investigated. Both papers \cite{B-M-N-2016} and \cite{gu-2017}
	deal with composition operators on
	$\ell_p$ for $p\ge 1$. Gu provided a
	characterization of $(m,p)$--isometric
	composition operators defined on $\ell_p$ that
	ties cardinality of preimages
	$\phi^{-n}(j)$ for $j\in\nbb$ with certain polynomials of
	degree at most $m-1$ (see
	\cite[Theorem~2.9]{gu-2017}). The operators
	considered in the above papers can be seen as
	composition operators on discrete measure
	spaces with counting measures. As opposed to
	them, we are focused on composition operators
	on discrete measure spaces with arbitrary
	discrete measures (see Section 2).
	
	The paper is devoted to investigation of classes of $m$--isometric operators, which are more rich in examples than the classes of unilateral and bilateral weighted shifts.
	We focus on the class of composition operators on directed graph with one circuit.
	It was used in \cite[Section 3]{B-J-J-S-2017} in connection with the study of unbounded subnormality.
	Its weighted counterpart appeared much earlier in \cite{carlson-1990}.
	Moreover, these composition operators have been very popular lately.
	For example, Anand, Chavan and Trivedi utilized weighted composition operators of this type to provide an interesting example of analytic $3$--isometric cyclic operator without the wandering subspace property (see \cite[Example 3.1]{A-C-T-2018}).
	Although, the operators belonging to this class can be, in general, unbounded and densely defined, we restrict ourselves to their subclass containing only bounded operators.
	
	As shown in the paper, the class of $m$--isometric composition operators on directed graphs with one circuit is rich in strict $m$--isometries for $m\ge 2$.
	Surprisingly, it turns out that the only completely hyperexpansive composition operators in this class are $2$--isometric (see Corollary~\ref{cor:complete-hyperex}).
	This phenomenon is a consequence of the fact that, in particular, $2$--expansiveness implies $2$--isometricity within this class.
	It is worth to mention that $3$--isometries have been very popular recently (see e.g. \cite{A-C-T-2018,B-B-M-P-2019,MC-R-2015}).
	Except for already mentioned results in \cite{A-C-T-2018}, these operators were, for instance, utilized by Berm\'udez et al. as examples to shed some light on relations of certain ergodic and dynamical properties of $m$--isometric operators (see \cite{B-B-M-P-2019} for more details).
	That is why we provide an explicit description of $3$--isometric composition operators on directed graphs with one circuit that involves real polynomials in one variable (see Theorem~\ref{th:characterization_3_isom}).
	
	The paper is organized as follows.
	Section 2 begins with setting up preliminary notation and definitions.
	Later on, discrete measure spaces and composition operators on directed graphs with one circuit are introduced (see \eqref{def:composition-operator}).
	Next several technical results required in the further proofs are given.
	Theorem \ref{th:m-iso-characterization}, which is the main result of this section, characterizes $m$--isometric composition operators on directed graphs with one circuit.
	As a consequence, Corollary~\ref{cor:m-iso-characterization-one-element} provides a characterization of $m$--isometric operators in the subclass of these operators with the circuit containing only one element.
	Finally, Corollary~\ref{cor:complete-hyperex} shows that the notion of complete hyperexpansiveness coincides with the notion of $2$--isometricity in this class.
	
	In Section 3 we state and investigate the $m$--isometric completion problem for unilateral weighted shifts and composition operators on directed graphs with one circuit.
	At the beginning we provide complete description of existence of a solution of the $m$--isometric completion problem for unilateral weighted shifts (see Proposition~\ref{pro:completion-shifts}).
	Proposition~\ref{pro:completion-composition}, whose proof essentially relies on the properties of circulant matrices, shows that, under some additional conditions on the length of the circuit, there exists a solution of the $m$--isometric completion problem, if measure of elements located on branches of $\phi$ is a priori given by polynomials of degree at most $m-1$.
	Using this fact we establish a characterization of $2$--isometric and $3$--isometric composition operators on directed graphs with one circuit having arbitrary number of elements in the circuit (see Theorem~\ref{th:characterization_3_isom}).
	This result is used in the proof of Proposition~\ref{pro:completion-circuit} which characterizes existence of a solution of another $2$--isometric completion problem, in which we assume that measure of elements located in the circuit is given.
	
	Section 4 is devoted to the study of subnormality of the Cauchy dual operators of $2$--isometric composition operators on directed graphs with one circuit.
	We begin with a characterization of analyticity of composition operators on directed graphs with one circuit (see Proposition~\ref{prop:analiticity}) and prove that $m$--isometric operators within this class are analytic (see Corollary~\ref{cor:analytic}).
	Theorem~\ref{th:subnormal} gives the affirmative answer to the Cauchy dual subnormality problem for $2$--isometric operators with circuit having one element.
	We conclude this section with two results related to $2$--isometric composition operators on directed graphs with one circuit, namely, we state a characterization of $\Delta_{\cf}$--regularity of these operators (see Proposition~\ref{pro:regular}) and show that they never satisfy the kernel condition (see Theorem~\ref{th:kernel-condition}).

	\section{Characterization of $m$--isometries}
	In this paper we use the following notation. 
	Symbols $\nbb$, $\zbb_+$, $\zbb$ and $\rbb_+$ stand for
	the sets of positive integers, nonnegative
	integers, integers and nonnegative reals, respectively. The
	fields of real and complex numbers are denoted by
	$\rbb$ and $\C$, respectively. For $i,j \in
	\zbb\sqcup\{\infty\}$ set $J_{[i,j]} =
	\{k\in\zbb\colon i\le k\le j\}$.
	We adhere to the conventions that
	$\sum_{k\in\emptyset} \alpha_k=0$ and
	$\prod_{k\in\emptyset} \alpha_k=1$.
	
	Define a linear transformation
	$\triangle\colon\rbb^{\zbb_+}\to \rbb^{\zbb_+}$
	by
	\begin{align*}
	(\triangle \gammab)_n=\gamma_{n+1}-\gamma_n,\quad
	n\in \zbb_+,\,
	\gammab=\{\gamma_n\}_{n=0}^\infty\in\rbb^{\zbb_+}.
	\end{align*}
	Relations $\triangle^0\gammab=\gammab$ and
	$\triangle^{j+1}\gammab=\triangle\triangle^{j}\gammab$
	for $j\in\zbb_+$ inductively define $\triangle^j$
	for all $j\in\zbb_+$. Using Newton's binomial
	formula, we can easily prove that
	\begin{align}
	\label{Newt1}
	(\triangle^m \gammab)_n = (-1)^m \sum_{k=0}^m (-1)^k \binom{m}{k} \gamma_{n+k}, \quad m,n\in\zbb_+, \, \gammab=\{\gamma_n\}_{n=0}^{\infty} \in \rbb^{\zbb_+}.
	\end{align}
	Let $\{\gamma_n\}_{n=0}^{\infty}\in\rbb^{\zbb_+}$. We say that $\gamma_n$ is {\em a polynomial
		in $n$} ({\em of degree} $k$) if there exists a
	polynomial $p$ in one indeterminate $x$ with real
	coefficients (of degree $k$) such that
	$p(n)=\gamma_n$ for all $n\in \zbb_+$. As a
	consequence of the Fundamental Theorem~of Algebra
	we see that, if exists, such $p$ is unique.
	
	The following well-known lemma plays a key
	role in our considerations (for more details see
	\cite[Section 2]{J-J-S-2019}).
	
	\begin{lem}
		\label{lem:polynomial}
		If $m\in\nbb$ and
		$\gammab=\{\gamma_n\}_{n=0}^\infty\in
		\rbb^{\zbb_+}$, then the following conditions are
		equivalent{\em :}
		\begin{enumerate}
			\item $\triangle^m \gammab = 0$,
			\item $\sum_{k=0}^m (-1)^k\binom mk \gamma_{n+k}=0$ for $n\in\zbb_+$,
			\item $\gamma_n$ is a polynomial in $n$ of degree at most $m-1$.
		\end{enumerate}
	\end{lem}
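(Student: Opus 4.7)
The equivalence (i) $\iff$ (ii) is immediate from the identity \eqref{Newt1} already recorded in the paper: the sequence $\triangle^m\gammab$ is identically zero precisely when $\sum_{k=0}^m(-1)^k\binom{m}{k}\gamma_{n+k}=0$ for every $n\in\zbb_+$, the overall sign $(-1)^m$ being irrelevant. So the real content is (i) $\iff$ (iii), and I would handle the two implications separately, using only elementary manipulations with the forward-difference operator.

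For (iii) $\Rightarrow$ (i), I would show that $\triangle$ sends a polynomial of degree exactly $k\ge1$ to a polynomial of degree $k-1$, and sends a constant to the zero sequence. This is a direct computation on monomials: by the binomial theorem, $(n+1)^k-n^k$ is a polynomial in $n$ of degree $k-1$, and the claim extends to arbitrary polynomials by linearity. Iterating, $\triangle^m$ annihilates every polynomial in $n$ of degree at most $m-1$, which gives (i).

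For (i) $\Rightarrow$ (iii), the key tool is the Newton forward-difference expansion
\begin{equation*}
\gamma_n \;=\; \sum_{k=0}^{n}\binom{n}{k}(\triangle^k\gammab)_0,\qquad n\in\zbb_+,
\end{equation*}
which I would establish by induction on $n$ (the inductive step uses $\binom{n+1}{k}=\binom{n}{k}+\binom{n}{k-1}$ together with $(\triangle^k\gammab)_0=(\triangle^{k-1}\gammab)_1-(\triangle^{k-1}\gammab)_0$). Assuming (i), the factorisation $\triangle^k=\triangle^{k-m}\circ\triangle^m$ for $k\ge m$ yields $(\triangle^k\gammab)_0=0$ for $k\ge m$, so the Newton expansion truncates to
\begin{equation*}
\gamma_n \;=\; \sum_{k=0}^{m-1}\binom{n}{k}(\triangle^k\gammab)_0.
\end{equation*}
Since each $\binom{n}{k}=\frac{n(n-1)\cdots(n-k+1)}{k!}$ is a polynomial in $n$ of degree $k\le m-1$, we obtain (iii).

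The only mildly non-routine step is the Newton expansion itself; everything else is a short induction or a direct monomial computation. The formula \eqref{Newt1} provided by the authors takes care of (i) $\Leftrightarrow$ (ii) for free, so no further combinatorial identity is needed.
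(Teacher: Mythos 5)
Your proposal is correct. The paper itself offers no proof of this lemma --- it is stated as well known, with a pointer to \cite[Section 2]{J-J-S-2019} --- so there is nothing internal to compare against; your argument is the standard one. The equivalence (i)$\Leftrightarrow$(ii) does indeed follow immediately from \eqref{Newt1}, the degree-lowering property of $\triangle$ gives (iii)$\Rightarrow$(i), and the Newton forward-difference expansion $\gamma_n=\sum_{k=0}^{n}\binom{n}{k}(\triangle^k\gammab)_0$ gives (i)$\Rightarrow$(iii); the only point worth making explicit is that the truncated sum $\sum_{k=0}^{m-1}\binom{n}{k}(\triangle^k\gammab)_0$ still represents $\gamma_n$ for the small values $n<m-1$ because $\binom{n}{k}$, read as the polynomial $\frac{n(n-1)\cdots(n-k+1)}{k!}$, vanishes for $0\le n<k$.
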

	
	\begin{cor}
		\label{cor:delta-polynomial-constant} If
		$m\in\nbb$, $p$ is a polynomial in one
		indeterminate $x$ with coefficients in $\rbb$ of
		the form $p(x)=a_{m-1} x^{m-1}+\ldots+a_0$ and
		$\gammab=\{p(n)\}_{n=0}^\infty\in\rbb^{\zbb_+}$,
		then $(\triangle^{m-1}\gammab)_n=(m-1)!
		\,a_{m-1}$ for all $n\in \zbb_+.$ In particular,
		if $p(n)\ge 0$ for all $n\in\zbb_+$, then
		$\triangle^{m-1}\gammab\subseteq \rbb_+$.
	\end{cor}
	
	Let $\hh$ be a nonzero complex Hilbert space. A
	linear map $T\colon\dom(T)\to\hh$, where
	$\dom(T)$ is a subspace of $\hh$, is called an
	\textit{operator} in $\hh$. By $\B$ we denote the
	$\mathcal{C}^*$-algebra of all bounded linear
	operators defined on $\hh$ and by $I$ the
	identity operator on $\hh$. For $T\in\B$ let
	$\Ker(T)$ and $\Rng (T)$ stand for the kernel and
	the range of operator $T$, respectively. If $T$
	is positive, then by $T^\frac{1}{2}$ we denote
	its positive square root. As usual, we write
	$T^*$ for the adjoint of $T$.
	
	For $T\in\B$ we define
	\begin{align*}
	\bscr_m(T) = \sum_{k=0}^m (-1)^k \binom{m}{k} {T^*}^kT^k, \quad m\in\zbb_+.
	\end{align*}
	Given $m\in\nbb$, we say that $T$ is an {\em
		$m$--isometry} if $\bscr_m(T)=0$. Following
	\cite{Bo-Ja}, we say that an $m$--isometry $T$ is
	{\em strict} if $T$ is an isometry or
	$m\ge 2$ and $T$ is not an $(m-1)$--isometry. 
	An operator $T$ is
	said to an {\em $m$--expansive} if $\bscr_m(T)\le
	0$ and {\em completely hyperexpansive}, if
	$\bscr_n(T)\le 0$ for every $n\in\nbb$.
	
	Let us recall an important property of
	$\bscr_m(T)$ (cf.\
	\cite[Lemma~1(a)]{richter-1988}).
	
	\begin{pro}{\cite[Theorem~2.5]{gu-2015}}
		\label{th:alternating-expansiveness}
		If $T\in\B$, $m\ge2$ is an integer and $(-1)^m\bscr_m(T)\le 0$, then $(-1)^m\bscr_{m-1}(T)\le 0$.
	\end{pro}
	
	If $T\in\B$ and $f\in \hh$, then we define the
	sequence $\gammab_{T,f}=
	\{(\gammab_{T,f})_n\}_{n=0}^{\infty}$ by
	\begin{align*}
	(\gammab_{T,f})_n=\|T^nf\|^2, \quad n\in \zbb_+.
	\end{align*}
	Observe that
	\begin{align}    \label{3007201}
	\langle\bscr_m(T)f,f\rangle=\sum_{k=0}^m (-1)^k
	\binom{m}{k} (\gammab_{T,f})_k, \quad f\in\hh,
	\;m\in \zbb_+.
	\end{align}
	The following fact is a consequence of
	\eqref{3007201}, \eqref{Newt1} and
	Lemma~\ref{lem:polynomial}.
	\begin{pro}  \label{3007202}
		If $T\in\B$ and $m\in\nbb$, then the following
		are equivalent{\em:}
		\begin{enumerate}
			\item[(i)] $T$ is $m$--isometric,
			\item[(ii)] $\sum_{k=0}^m (-1)^k\binom mk (\gammab_{T,f})_{k}=0$ for every $f\in\hh$,
			\item[(iii)] $(\gammab_{T,f})_n$
			is a polynomial in $n$ of degree at most $m-1$
			for every $f\in\hh$.
		\end{enumerate}
	\end{pro}

	For the reader's convenience we
	recall the classification of all directed graphs
	induced by self-maps having only one
	vertex with degree greater than one and all
	other vertices having degrees equal to one. It
	turns out that there are only two possible cases
	(see \cite[Theorem~3.2.1]{B-J-J-S-2017}). The first case is related to the directed trees with
	one branching vertex and the
	second one is connected with the composition
	operators on directed graphs with one
	circuit (see
	Figure~\ref{fig:composition-operator} below).
	From now on we focus on the latter class of operators. 
	
	Let us
	now recall the definition and fundamental facts
	regarding composition operators. By a {\em discrete
		measure space} we mean a measure space
	$(X,\ascr,\mu)$, where $X$ is a countably
	infinite set, $\ascr$ is the $\sigma$-algebra of
	all subsets of $X$ and $\mu$ is a positive
	measure on $\ascr$ such that
	$\mu(x):=\mu(\{x\})\in(0,\infty)$ for all $x\in X$. 
	Suppose now that $(X,\ascr,\mu)$ is
	a measure space and a mapping
	$\phi\colon X\to X$ is measurable, i.e.\
	$\phi^{-1}(\ascr)\subseteq\ascr$. Denote by
	$\mu\circ\phi^{-1}$ the measure on $\ascr$ given
	by
	\begin{align*}
	\mu\circ\phi^{-1}(\varDelta)=\mu(\phi^{-1}(\varDelta)),
	\quad \varDelta \in \ascr.
	\end{align*}
	We say that $\phi$ is {\em nonsingular}, if
	$\mu\circ\phi^{-1}$ is absolutely continuous with
	respect to $\mu$. If $\phi$ is nonsingular, then
	the operator of the form $\cf f:=f\circ\phi$
	acting in $L^2(\mu):=L^2(X,\ascr,\mu)$ with the
	domain
	\begin{align*}
	\dom(\cf):=\{f\in L^2(\mu): f\circ\phi\in L^2(\mu)\}
	\end{align*}
	is well-defined and it is called {\it composition
		operator}. 
	If $(X,\ascr,\mu)$ is a discrete measure space, then $\phi$ is automatically
	nonsingular. 
	Suppose now that $(X,\ascr,\mu)$ is
	a discrete measure space.
	By the Radon-Nikodym theorem, there
	exists a unique $\ascr$-measurable function
	$\hn{} \colon X \to [0,\infty]$ such that
	\begin{align*}
	\mu\circ \phi^{-1}(\varDelta) = \int_\varDelta
	\hn{}\D \mu, \quad \varDelta \in \ascr.
	\end{align*}
	Recall that
	\begin{align}
	\label{eq:boundedness-condition}
	\text{ $\cf$ is bounded if and only if $\hn{}\in L^\infty(\mu)$;}
	\end{align}
	if this is the case, then $\|\cf\|^2 = \|\hn{}\|_{L^\infty(\mu)}$ (see \cite[Theorem~1]{nordgren-1978}).
	If $n \in \nbb$, then by $\phi^n$ we denote the $n$-fold composition of $\phi$ with itself; $\phi^0$ is the identity mapping.
	It is easily seen that if $\phi$ is nonsingular and $n \in \zbb_+$, then $\phi^n$ is also nonsingular, so $\hn{n}$ makes sense; in particular $\hn{0}\equiv 1$ and $\hn{1}=\hn{}$.
	Note that if $\cf\in\B[L^2(\mu)]$, then
	\begin{align}
	\label{aa}
	\|\cf^nf\|^2=\int_X|f\circ\phi^n|^2\mathrm{d}\mu=\int_X \hn{n}|f|^2\mathrm{d}\mu, \quad f\in L^2(\mu),\; n\in\zbb_+.
	\end{align}
	Combining this, Proposition~\ref{th:alternating-expansiveness} and \cite[Lemma~2.1]{jablonski-2003} we get the following fact.
	\begin{align}
	\label{eq:newton-symbol-inequality}
	\begin{minipage}{75ex}
	If $\cf\in\B[L^2(\mu)]$, $m\ge2$ is an integer and $(-1)^m\sum_{k=0}^m (-1)^k \binom{m}{k} \hn{k}\le 0$, then $(-1)^m\sum_{k=0}^{m-1} (-1)^k \binom{m-1}{k} \hn{k}\le 0$.
	\end{minipage}
	\end{align}
	
	The next result, which is a direct consequence of \cite[Lemma~2.3(ii)]{jablonski-2003}, Lemma~\ref{lem:polynomial} and \eqref{aa}, provides a characterization of bounded $m$--isometric composition operators on discrete measure spaces.
	
	\begin{pro}
		\label{lem:m-iso-characterization}
		If $m\in\nbb$, $(X,\ascr,\mu)$ is a discrete measure space and $\cf\in\B[L^2(\mu)]$, then the following are equivalent{\em :}
		\begin{enumerate}
			\item $\cf$ is an $m$--isometry,
			\item for all $x\in X$,
			\begin{align}
			\label{wz1}
			\sum_{k=0}^m (-1)^k\binom mk \hn{k}(x)=0,
			\end{align}
			\item $\sum_{k=0}^m (-1)^k\binom mk \hn{n+k}(x)=0$ for all $x\in X$ and $n\in\zbb_+$,
			\item $\hn{n}(x)$ is a polynomial in $n$ of degree at most $m-1$ for all $x\in X$.
		\end{enumerate}
	\end{pro}
	
	Observe that, if $\operatorname{card}(\phi^{-1}(x)) = 0$ for some $x\in X$, then $\cf$ is not an $m$--isometry, as condition (ii) from Proposition~\ref{lem:m-iso-characterization} is not satisfied.
	
	Let us gather the following assumptions.
	\begin{gather}
	\tag{AS}
	\label{def:composition-operator}
	\begin{gathered}
	\begin{minipage} {75ex}
	Suppose that $\kappa\in\nbb$, $\eta \in \nbb \cup \{\infty\}$ and
	$$X = \{x_1, \ldots, x_{\kappa}\} \cup \bigcup_{i=1}^{\eta}\Big\{x_{i,j}\colon j \in \nbb\Big\},$$
	where $\{x_i\}_{i=1}^{\kappa}$ and $\{x_{i,j}\}_{i=1}^{\eta}{_{j=1}^{\infty}}$ are two disjoint systems of distinct points of $X$.
	Assume that $(X, \ascr, \mu)$ is a discrete measure space and a self-map $\phi$ of $X$ is defined by
	\end{minipage}   \\ \notag
	\begin{aligned}
	\phi(x) = \begin{cases}
	x_{i,j-1} & \text{ if } x = x_{i,j} \text{ for some } i\in J_{[1,\eta]} \text{ and } \, j\in \nbb\setminus \{1\},\\
	x_{\kappa} & \text{ if } x = x_{i,1} \text{ for some } i\in J_{[1,\eta]} \text{ or } x=x_1, \\
	x_{i-1} & \text{ if } x = x_{i} \text{ for some } i\in \{j\in\nbb\colon 2\le j \le  \kappa\}.
	\end{cases}
	\end{aligned}
	\end{gathered}
	\end{gather}
	An operator $\cf$ defined on a discrete measure space $(X,\ascr,\mu)$ satisfying \eqref{def:composition-operator} is called a \textit{composition operator on a directed graph with one circuit}.
	\begin{figure}
		\begin{tikzpicture}[-latex,auto,node distance =1.8 cm,
		on grid, state/.style={circle, draw,
			minimum width=0.7cm, fill=gray!30}]
		\node[state](BRANCH) {$x_\kappa$};
		\node[state](xk) [above left =of
		BRANCH]{$x_{\kappa-1}$}; \node[state](x1)
		[below left =of BRANCH]{$x_1$};
		
		\node[state](x11)[above right =of
		BRANCH]{$x_{1,1}$};
		\node[state](x12)[right =of
		x11]{$x_{1,2}$}; \node[state](x13)[right
		=of x12]{$x_{1,3}$}; \node(x14)[right
		=of x13]{$ $};
		
		\node[state](x21)[right =of
		BRANCH]{$x_{2,1}$};
		\node[state](x22)[right =of
		x21]{$x_{2,2}$}; \node[state](x23)[right
		=of x22]{$x_{2,3}$}; \node(x24)[right
		=of x23]{$ $};
		
		\node(end)[below right =of BRANCH]{$.$};
		\node(end1)[right =of end]{$.$};
		\node(end2)[right =of end1]{$.$};
		\node(end3)[right =of end2]{$ $};
		
		\path (x1) edge [bend left, dashed]
		node[below] {$ $} (xk); \path (BRANCH)
		edge [bend left] node[below] {$ $} (x1);
		\path (xk) edge [bend left] node[below]
		{$ $} (BRANCH);
		
		\path (BRANCH) edge (x11); \path (x11)
		edge (x12); \path (x12) edge (x13);
		\path[-] (x13) edge [dashed] (x14);
		
		\path (BRANCH) edge (x21); \path (x21)
		edge (x22); \path (x22) edge (x23);
		\path[-] (x23) edge [dashed] (x24);
		
		\path[-] (BRANCH) edge [dashed] (end);
		\path[-] (end1) edge [dashed] (end);
		\path[-] (end2) edge [dashed] (end1);
		\path[-] (end3) edge [dashed] (end2);
		\end{tikzpicture}
		\caption{\label{fig:composition-operator}
			The graph connected with a
			composition operator on a directed graph with one
			circuit.}
	\end{figure}
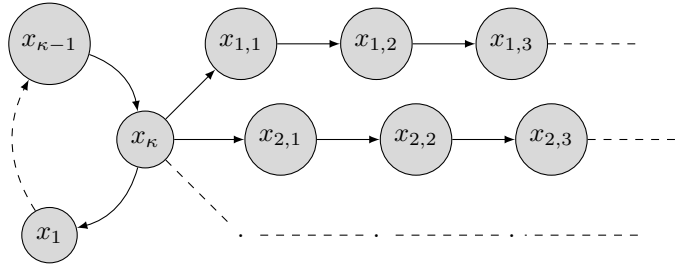
	
	For $\kappa\in\nbb$, denote by $\Phi_1\colon\zbb\to\zbb$ and $\Phi_2\colon\zbb\to\{1,\ldots,\kappa\}$ the functions uniquely determined by the following formula
	\begin{align*}
	n=\Phi_1(n)\kappa + \Phi_2(n),\quad n\in\zbb.
	\end{align*}
	It follows from the definition of functions $\Phi_1$ and $\Phi_2$ that
	\begin{align}
	\begin{aligned}
	\label{aa1}
	\Phi_1(l\kappa+1)&=\Phi_1(l\kappa+r), \quad l\in \zbb,\;r\in J_{[1,\kappa]},\\
	\Phi_2(l\kappa+r_1+r_2)&=\Phi_2(l\kappa +r_1)+r_2,\quad l\in \zbb,\;r_1\in
	\nbb,\;r_2\in \zbb_+,\; r_1+r_2\le \kappa.
	\end{aligned}
	\end{align}
	It is easily seen that the Radon-Nikodym
	derivative $\hn{}$ and $\hn{n}(x)$ for
	$x\in\{x_\kappa\}\cup\{x_{i,j},\colon i\in
	J_{[1,\eta]},\;j\in\nbb\}$ can be calculated as
	follows (cf.\ \cite[(3.4.6) \&
	(3.4.7)]{B-J-J-S-2017}):
	\begin{align}
	\label{rndx151}
	\hn{}(x) &=\begin{cases}
	\frac{\mu(x_1)+\sum_{i=1}^\eta\mu(x_{i,1})}{\mu(x_\kappa)} & \text{if } x=x_\kappa,\\[1ex]
	\frac{\mu(x_{r+1})}{\mu(x_r)} & \text{if } x=x_r \text{ for some } r\in J_{[1,\kappa-1]},\\[1ex]
	\frac{\mu(x_{i,j+1})}{\mu(x_{i,j})} &
	\text{if } x=x_{i,j} \text{ for some } i\in
	J_{[1,\eta]},\; j\in\nbb,
	\end{cases}
	\end{align}
	\begin{align}
	\label{rndx150}
	\hn{n}(x_\kappa) &= \frac{\mu(x_{\Phi_2(n)})}{\mu(x_\kappa)} + \sum_{i=1}^{\eta}\sum_{l=0}^{\Phi_1(n)} \frac{\mu(x_{i,l\kappa+\Phi_2(n)})}{\mu(x_k)}, \quad n\in\zbb_+, \\
	\label{rndx11} \hn{n}(x_{i,j})&=
	\frac{\mu(x_{x_{i,n+j}})}{\mu(x_{i,j})},
	\quad j\in\nbb,\;i\in J_{[1,\eta]},\;
	n\in\zbb_+.
	\end{align}
	Observe also that (cf.\ \cite[(3.4.7)]{B-J-J-S-2017})
	\begin{align}
	\label{hfnx0}
	\begin{aligned}
	\hn{n}(x_r)&=\frac{\mu(x_{1})}{\mu(x_r)}
	\hn{n+r-1}(x_{1}), \quad r \in J_{[1,\kappa]},\;
	n\in\zbb_+.
	\end{aligned}
	\end{align}
	Using \eqref{rndx150} and \eqref{hfnx0} we can prove that
	\begin{align}
	\label{rndx0}
	\hn{n}(x_{1}) &= \frac{\mu(x_{\Phi_2(n+1)})} {\mu(x_{1})} + \sum_{i=1}^{\eta}\sum_{l=0}^{\Phi_1(n+1)-1}
	\frac{\mu(x_{i,l\kappa+\Phi_2(n+1)})}{\mu(x_{1})}, \quad n\in\zbb_+.
	\end{align}
	
	Now we collect some necessary facts needed in the proof of Theorem~\ref{th:m-iso-branches}.
	First recall that (cf., \cite[p.\ 524]{jablonski-2003})
	\begin{align}
	\label{rndx2}
	\text{if $m\in\nbb$ and $j\in J_{[1,m]}$, then }\sum_{p=j}^m(-1)^p\binom mp =(-1)^j\binom{m-1}{j-1}.
	\end{align}
	The following result is used in the proof of Lemma~\ref{lem:technical-sum-decomposition}.
	
	\begin{lem}
		If $\kappa$, $p\in\nbb$ and $\{a_n\}_{n=1}^p\subseteq \rbb$, then
		\begin{align}
		\label{rndx1}
		\sum_{r=1}^{\kappa} \sum_{l=0}^{\Phi_1(p+r)-1} {a_{l\kappa+\Phi_2(p+r)}} = \sum_{j=1}^p a_j.
		\end{align}
	\end{lem}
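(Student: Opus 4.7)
The plan is to prove the identity by exhibiting an index-set bijection, as the two sides are sums (with the same summands $a_j$) and all terms are real numbers attached to indices in $\nbb$. Specifically, I will show that the map
\begin{align*}
\Psi\colon \bigl\{(r,l) : r\in J_\kappa,\ 0\le l\le \Phi_1(p+r)-1\bigr\}\longrightarrow \{1,2,\ldots,p\},\quad \Psi(r,l)=l\kappa+\Phi_2(p+r),
\end{align*}
is a bijection. Once this is established, \eqref{rndx1} follows at once by reindexing (and the convention that a sum over an empty set is zero takes care of those $r$ for which $\Phi_1(p+r)=0$, which only happens when $p+r\le\kappa$).

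First I would verify that the image of $\Psi$ lies in $\{1,\ldots,p\}$. The lower bound is immediate from $\Phi_2(p+r)\ge 1$. For the upper bound, the defining relation $n=\Phi_1(n)\kappa+\Phi_2(n)$ gives
\begin{align*}
l\kappa+\Phi_2(p+r)\le (\Phi_1(p+r)-1)\kappa+\Phi_2(p+r)=p+r-\kappa\le p,
\end{align*}
where the last inequality uses $r\le\kappa$.

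Next I would prove bijectivity. The surjectivity-plus-injectivity is handled together: given $j\in\{1,\ldots,p\}$ decompose $j=\Phi_1(j)\kappa+\Phi_2(j)$. Because $\{p+1,p+2,\ldots,p+\kappa\}$ is a complete system of residues modulo $\kappa$ represented in $\{1,\ldots,\kappa\}$, there is exactly one $r\in J_\kappa$ with $\Phi_2(p+r)=\Phi_2(j)$. Setting $l=\Phi_1(j)$ gives $\Psi(r,l)=j$, and this is the only $(r,l)$ mapping to $j$. It remains to check that $(r,l)$ lies in the index set: clearly $l\ge 0$, and since $j\le p<p+r$ while both expansions share the same $\Phi_2$-value, the relation $\Phi_1(j)\kappa+\Phi_2(j)<\Phi_1(p+r)\kappa+\Phi_2(p+r)$ forces $l=\Phi_1(j)\le\Phi_1(p+r)-1$.

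The only real subtlety—and the step I expect to need the most care in writing—is the residue-class argument: justifying cleanly that $r\mapsto\Phi_2(p+r)$ is a bijection of $J_\kappa$ onto itself. This is where the arithmetic identities \eqref{aa1} come in, and I would invoke them (or a direct mod-$\kappa$ computation) to make the claim rigorous. Everything else is bookkeeping, so once the bijection $\Psi$ is in place the identity \eqref{rndx1} follows by summing $a_{\Psi(r,l)}$ over the domain in two equivalent ways.
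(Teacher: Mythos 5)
Your argument is correct, and it takes a genuinely different (and arguably cleaner) route than the paper's. You prove \eqref{rndx1} by exhibiting an explicit bijection $\Psi(r,l)=l\kappa+\Phi_2(p+r)$ from the index set of the double sum onto $\{1,\ldots,p\}$: the range check uses $l\le\Phi_1(p+r)-1$ and $r\le\kappa$, and the inverse is recovered from the uniqueness of the decomposition $j=\Phi_1(j)\kappa+\Phi_2(j)$ with $\Phi_2(j)\in J_\kappa$ together with the fact that $r\mapsto\Phi_2(p+r)$ permutes $J_\kappa$ (a complete-residue-system observation that follows from \eqref{aa1} or a direct computation modulo $\kappa$). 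The paper instead performs a direct sum manipulation: it splits $J_\kappa$ into $A=\{r\colon 1\le r\le\kappa-\Phi_2(p)\}$ and $B=\{r\colon\kappa-\Phi_2(p)+1\le r\le\kappa\}$, computes $\Phi_1(p+r)$ and $\Phi_2(p+r)$ separately on each piece, shifts the inner summation index over $B$ to peel off the tail $\sum_{j=1}^{\Phi_2(p)}a_j$, and then recombines. The underlying combinatorial content is identical --- the double sum enumerates $\{1,\ldots,p\}$ exactly once --- but your version makes that statement explicit and avoids the case split and the reindexing bookkeeping, at the cost of having to verify injectivity and surjectivity separately; the paper's version stays entirely within elementary summation identities and so is perhaps easier to check line by line. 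Both correctly rely on the empty-sum convention for those $r$ with $\Phi_1(p+r)=0$.
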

	\begin{proof}
		Set
		\begin{align*}
		A&=\{r\in\zbb_+\colon 1\le r \le\kappa-\Phi_2(p)\},\\
		B&=\{r\in\zbb_+\colon \kappa-\Phi_2(p)+1\le r \le \kappa\},
		\end{align*}
		and observe that (use equality $p=\Phi_1(p)\kappa+\Phi_2(p)$ and \eqref{aa1})
		\begin{align}
		\label{phii1}
		\begin{aligned}
		&\Phi_1(p+r)=\Phi_1(p),
		&&\Phi_2(p+r)=\Phi_2(p)+r,  &&r\in A, \\
		&\Phi_1(p+r)=\Phi_1(p)+1,
		&&\Phi_2(p+r)=\Phi_2(p)+r-\kappa,
		&&r\in B,
		\end{aligned}
		\end{align}
		This implies that
		\begin{align} \notag
		\sum_{r\in B} \sum_{l=0}^{\Phi_1(p)} {a_{l\kappa+\Phi_2(p+r)}}
		&= \sum_{r\in B} \sum_{l=0}^{\Phi_1(p)} {a_{(l-1)\kappa+\Phi_2(p)+r}} \\
		\label{phii2}
		&= \sum_{r\in B} \sum_{l=0}^{\Phi_1(p)-1} {a_{l\kappa+\Phi_2(p)+r}}+\sum_{j=1}^{\Phi_2(p)}a_j,
		\end{align}
		and consequently
		\begin{align*}
		\sum_{r=1}^{\kappa}& \sum_{l=0}^{\Phi_1(p+r)-1} {a_{l\kappa+\Phi_2(p+r)}}\\
		&\overset{\eqref{phii1}}= \sum_{r\in A} \sum_{l=0}^{\Phi_1(p)-1} {a_{l\kappa+\Phi_2(p)+r}}+ \sum_{r\in B} \sum_{l=0}^{\Phi_1(p)} {a_{l\kappa+\Phi_2(p+r)}} \\
		& \overset{\eqref{phii2}}= \sum_{r=1}^{\kappa} \sum_{l=0}^{\Phi_1(p)-1} {a_{l\kappa+\Phi_2(p)+r}} +\sum_{j=1}^{\Phi_2(p)}a_j \\
		&\hspace{1.7ex}=\sum_{j=1}^{p}a_j,
		\end{align*}
		which completes the proof.
	\end{proof}
	
	Now we prove a technical lemma which is vital for later results.
	
	\begin{lem}
		\label{lem:technical-sum-decomposition}
		Suppose that $m\in\nbb$,
		\eqref{def:composition-operator} holds and
		$\sum_{i=1}^\eta\sum_{j=1}^m\mu(x_{i,j})<\infty$.
		Then
		\begin{align*}
		\sum_{r=1}^{\kappa} \mu(x_r)\sum_{p=0}^{m} (-1)^p\binom{m}{p} \hn{p}(x_r)
		&=-\sum_{i=1}^{\eta} \sum_{p=0}^{m-1}(-1)^p \binom{m-1}{p} \mu(x_{i,p+1}) \\
		&=-\sum_{i=1}^{\eta} \mu(x_{i,1}) \sum_{p=0}^{m-1} (-1)^p \binom{m-1}{p} \hn{p}(x_{i,1}).
		\end{align*}
	\end{lem}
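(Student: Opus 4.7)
The plan is to evaluate the left-hand side by expressing every quantity in terms of the measures of points in $X$, using the explicit formulas \eqref{rndx150}--\eqref{rndx0}, and then collecting terms via the identity \eqref{rndx1} and the binomial identity \eqref{rndx2}.

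First, I would apply \eqref{hfnx0} to rewrite $\mu(x_r)\hn{p}(x_r)=\mu(x_1)\hn{p+r-1}(x_1)$ for $r\in J_\kappa$ and $p\in\zbb_+$. Denoting the sum on the left-hand side by $S$, this gives
\begin{align*}
S=\sum_{r=1}^{\kappa}\sum_{p=0}^{m}(-1)^p\binom{m}{p}\,\mu(x_1)\hn{p+r-1}(x_1).
\end{align*}
Next, inserting the formula \eqref{rndx0} for $\mu(x_1)\hn{n}(x_1)$ splits $S$ into two natural pieces $S=S_1+S_2$, where
\begin{align*}
S_1 &= \sum_{r=1}^{\kappa}\sum_{p=0}^{m}(-1)^p\binom{m}{p}\,\mu(x_{\Phi_2(p+r)}),\\
S_2 &= \sum_{i=1}^{\eta}\sum_{p=0}^{m}(-1)^p\binom{m}{p}\sum_{r=1}^{\kappa}\sum_{l=0}^{\Phi_1(p+r)-1}\mu(x_{i,l\kappa+\Phi_2(p+r)}),
\end{align*}
where the integrability hypothesis $\sum_{i,j\le m}\mu(x_{i,j})<\infty$ (together with the identity \eqref{rndx1} to be used shortly) justifies the rearrangement of the series defining $S_2$.

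For $S_1$ I would observe that, for each fixed $p\in\zbb_+$, the map $r\mapsto\Phi_2(p+r)$ is a bijection of $J_\kappa$ onto itself, so $\sum_{r=1}^\kappa \mu(x_{\Phi_2(p+r)})=\sum_{r=1}^\kappa \mu(x_r)$ is independent of $p$. Consequently $S_1=\bigl(\sum_{r=1}^\kappa \mu(x_r)\bigr)\sum_{p=0}^m(-1)^p\binom{m}{p}=0$ by the binomial theorem (since $m\ge 1$). For $S_2$, the summand with $p=0$ vanishes because $\Phi_1(r)=0$ for $r\in J_\kappa$ makes the inner sum empty, while for $p\in J_m$ the identity \eqref{rndx1} collapses the double sum over $r$ and $l$ into $\sum_{j=1}^p \mu(x_{i,j})$, yielding
\begin{align*}
S_2=\sum_{i=1}^{\eta}\sum_{p=1}^{m}(-1)^p\binom{m}{p}\sum_{j=1}^{p}\mu(x_{i,j})=\sum_{i=1}^{\eta}\sum_{j=1}^{m}\mu(x_{i,j})\sum_{p=j}^{m}(-1)^p\binom{m}{p}
\end{align*}
after interchanging the order of summation.

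Finally, I would apply \eqref{rndx2} to replace the inner partial binomial sum by $(-1)^j\binom{m-1}{j-1}$ and reindex $j=p+1$, obtaining
\begin{align*}
S=S_2=-\sum_{i=1}^{\eta}\sum_{p=0}^{m-1}(-1)^p\binom{m-1}{p}\mu(x_{i,p+1}),
\end{align*}
which is the first claimed equality. The second equality then follows immediately from \eqref{rndx11}, namely $\mu(x_{i,p+1})=\mu(x_{i,1})\hn{p}(x_{i,1})$. The only delicate point is the bookkeeping in the $S_2$ computation—verifying that the $p=0$ contribution is empty and matching the limits of summation correctly after invoking \eqref{rndx1}—but both are routine once the decomposition $S=S_1+S_2$ is in place.
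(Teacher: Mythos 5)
Your proposal is correct and follows essentially the same route as the paper's proof: rewrite the left-hand side via \eqref{hfnx0} and \eqref{rndx0}, kill the circuit contribution using the $p$-independence of $\sum_{r=1}^\kappa\mu(x_{\Phi_2(p+r)})$, collapse the branch contribution with \eqref{rndx1}, interchange summation, and finish with \eqref{rndx2} and \eqref{rndx11}. Your explicit handling of the vanishing $p=0$ term and of the rearrangement justification is slightly more careful than the paper's, but the argument is identical in substance.
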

	\begin{proof}
		First observe that by our assumptions
		$\hn{p}(x_r)\in\rbb_+$ for all $p\in J_{[1,m]}$
		and $r\in J_{[1,\kappa]}$, and for $r\in
		J_{[1,\kappa]}$, \allowdisplaybreaks
		\begin{align}
		\notag
		\mu(x_r)&\sum_{p=0}^{m} (-1)^p \binom{m}{p} \hn{p}(x_r) \\
		\notag &\overset{\eqref{hfnx0}} = \sum_{p=0}^{m} (-1)^p \mu(x_{1})\binom{m}{p}\hn{p+r-1} (x_{1}) \\
		\label{wz3}
		&\overset{\eqref{rndx0}}=\sum_{p=0}^{m} (-1)^p \binom{m}{p} \bigg({\mu(x_{\Phi_2(p+r)})} +
		\sum_{i=1}^{\eta}\sum_{l=0}^{\Phi_1(p+r)-1} {\mu(x_{i,l\kappa+\Phi_2(p+r)})} \bigg).
		\end{align}
		The latter, combined with the fact that
		\begin{align*}
		\sum_{r=1}^{\kappa}{\mu(x_{\Phi_2(p+r)})}=\sum_{r=1}^{\kappa} \mu({x_r}), \quad p\in J_{[0,m]},
		\end{align*}
		yields
		\begin{align*}
		\sum_{r=1}^{\kappa} &\mu(x_r)\sum_{p=0}^{m} (-1)^p \binom{m}{p} \hn{p}(x_r) \\
		& =\sum_{i=1}^{\eta} \sum_{p=0}^{m} (-1)^p \binom{m}{p} \sum_{r=1}^{\kappa} \sum_{l=0}^{\Phi_1(p+r)-1}{\mu(x_{i,l\kappa+\Phi_2(p+r)})}    \\
		& \overset{\eqref{rndx1}}=\sum_{i=1}^{\eta} \sum_{p=1}^{m} (-1)^p\binom{m}{p} \sum_{j=1}^{p} \mu(x_{i,j})\\
		& =\sum_{i=1}^{\eta} \sum_{j=1}^m\sum_{p=j}^{m} (-1)^p \binom{m}{p}\mu(x_{i,j})\\
		& \overset{\eqref{rndx2}}=\sum_{i=1}^{\eta}  \sum_{j=1}^{m} (-1)^j \binom{m-1}{j-1}\mu(x_{i,j})\\
		& =-\sum_{i=1}^{\eta} \sum_{j=0}^{m-1} (-1)^j \binom{m-1}{j}\mu(x_{i,j+1})\\
		& \overset{\eqref{rndx11}}=-\sum_{i=1}^{\eta} \mu(x_{i,1}) \sum_{j=0}^{m-1} (-1)^j \binom{m-1}{j}\hn{j}(x_{i,1}).
		\end{align*}
		This completes the proof.
	\end{proof}
	
	The below proposition is the main tool used in the proof of Theorem~\ref{th:m-iso-branches}.
	
	\begin{pro}
		\label{thm:poly-degree-branches} Suppose $m\ge2$, 
		\eqref{def:composition-operator} holds,
		$\mu(x_{i,j+1})$ is a polynomial in $j$ of
		degree at most $m-1$ for all $i\in
		J_{[1,\eta]}$ and $\sum_{i=1}^\eta\sum_{j=1}^m\mu(x_{i,j})<\infty$.
		Then $\mu(x_{i,j+1})$ is a
		polynomial in $j$ of degree at most $m-2$
		for all $i\in J_{[1,\eta]}$ if and only~if
		\begin{align} \label{kvnui2bdv}
		\sum_{r=1}^\kappa\mu(x_r)\sum_{p=0}^{m}
		(-1)^p \binom{m}{p} \hn{p}(x_r)=0.
		\end{align}
	\end{pro}
	\begin{proof}
		Set
		$\gammab_i=\{\mu(x_{i,j+1})\}_{j=0}^\infty$
		for $i\in J_{[1,\eta]}$. Applying
		Lemma~\ref{lem:polynomial} we deduce that
		$\triangle^m \gammab_i=0$ for $i\in
		J_{[1,\eta]}$. Hence, by
		Corollary~\ref{cor:delta-polynomial-constant},
		there exists a system $\{a_i\colon i\in
		J_{[1,\eta]}\}\subseteq\rbb_+$ such that
		\begin{align}
		\label{eq:delta-sequence-constant}
		(\triangle^{m-1} \gammab_i)_j=a_i,
		\quad j\in\nbb,\;i\in J_{[1,\eta]}.
		\end{align}
		Using Lemma~\ref{lem:technical-sum-decomposition} we get
		\allowdisplaybreaks
		\begin{align*}
		\sum_{r=1}^{\kappa} &\mu(x_r)\sum_{p=0}^{m} (-1)^p \binom{m}{p} \hn{p}(x_r)
		=-\sum_{i=1}^{\eta} \sum_{j=0}^{m-1} (-1)^j \binom{m-1}{j}\mu(x_{i,j+1})\\
		&\overset{\eqref{Newt1}}=(-1)^m\sum_{i=1}^{\eta} (\triangle^{m-1}\gamma_i)_0
		=(-1)^m\sum_{i=1}^{\eta}a_i.
		\end{align*}
		This combined with the fact that $a_i\ge
		0$ for $i\in J_{[1,\eta]}$,
		\eqref{eq:delta-sequence-constant} and
		Lemma~\ref{lem:polynomial} proves the required equivalence and completes the proof.
	\end{proof}
	
	The next theorem states that if $\cf$ is a bounded $m$--isometric composition operator on a directed graph with one circuit, then measure on branches form, in fact, polynomials of degree at most $m-2$.
	
	\begin{thm}
		\label{th:m-iso-branches} Suppose that
		\eqref{def:composition-operator} holds, $m$
		is an integer such that $m\ge2$ and
		$\cf\in\B[L^2(\mu)]$ is an $m$--isometry.
		Then $\mu(x_{i,j+1})$ is a polynomial in
		$j$ of degree at most $m-2$ for all $i\in
		J_{[1,\eta]}$.
	\end{thm}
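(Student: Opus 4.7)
The strategy is to reduce the claim, via Theorem~\ref{thm:poly-degree-branches}, to verifying two facts: (a) $\mu(x_{i,j+1})$ is already a polynomial in $j$ of degree at most $m-1$ for every $i\in J_\eta$, and (b) the circuit-sum $\sum_{r=1}^\kappa \mu(x_r)\sum_{p=0}^m(-1)^p\binom{m}{p}\hn{p}(x_r)$ vanishes. Both will follow almost immediately from the $m$-isometricity of $\cf$ via Lemma~\ref{lem:m-iso-characterization}.

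First I would extract (a) from \eqref{rndx11}. Since $\cf$ is an $m$-isometry, Lemma~\ref{lem:m-iso-characterization}(iv) gives, for each fixed $i\in J_\eta$, that $\hn{n}(x_{i,1})$ is a polynomial in $n$ of degree at most $m-1$. Rewriting \eqref{rndx11} as $\mu(x_{i,j+1})=\mu(x_{i,1})\,\hn{j}(x_{i,1})$ therefore exhibits $j\mapsto \mu(x_{i,j+1})$ as a polynomial of degree at most $m-1$, which is the hypothesis required to invoke Theorem~\ref{thm:poly-degree-branches}.

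Next I would verify the summability assumption $\sum_{i\in J_\eta}\sum_{j=1}^m\mu(x_{i,j})<\infty$ that is implicit in applying Lemma~\ref{lem:technical-sum-decomposition} (and hence Theorem~\ref{thm:poly-degree-branches}). By \eqref{eq:boundedness-condition}, $\|\hn{}\|_{L^\infty(\mu)}<\infty$; evaluating \eqref{rndx151} at $x_\kappa$ yields $\sum_{i=1}^\eta \mu(x_{i,1})\le \|\hn{}\|_{L^\infty(\mu)}\mu(x_\kappa)<\infty$, and then evaluating \eqref{rndx151} at $x_{i,j-1}$ gives $\mu(x_{i,j})\le \|\hn{}\|_{L^\infty(\mu)}\,\mu(x_{i,j-1})$, so by induction $\mu(x_{i,j})\le\|\hn{}\|_{L^\infty(\mu)}^{\,j-1}\mu(x_{i,1})$ for every $j\in\nbb$. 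Summing over $i$ and over $j\in J_m$ gives the required finiteness.

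For (b), I would apply Lemma~\ref{lem:m-iso-characterization}(ii): for every $x\in X$,
\begin{align*}
\sum_{p=0}^m(-1)^p\binom{m}{p}\hn{p}(x)=0.
\end{align*}
Specializing to $x=x_r$, multiplying by $\mu(x_r)>0$, and summing over $r\in J_\kappa$ yields $\sum_{r=1}^\kappa \mu(x_r)\sum_{p=0}^m(-1)^p\binom{m}{p}\hn{p}(x_r)=0$. Plugging (a), the summability estimate, and this vanishing into Theorem~\ref{thm:poly-degree-branches} (the implication from the vanishing of the circuit-sum to degree at most $m-2$) finishes the proof.

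There is essentially no hard obstacle: all genuine combinatorial content has been isolated in Lemma~\ref{lem:technical-sum-decomposition} and Theorem~\ref{thm:poly-degree-branches}, and the present theorem is their direct application. The only minor point demanding care is the summability verification needed before invoking those results, and that is handled by the $L^\infty$-bound on $\hn{}$.
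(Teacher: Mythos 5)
Your proof is correct and follows essentially the same route as the paper: derive from $m$--isometricity (via Lemma~\ref{lem:m-iso-characterization} and \eqref{rndx11}) that $\mu(x_{i,j+1})$ is a polynomial in $j$ of degree at most $m-1$, note that the circuit sum vanishes, and invoke Theorem~\ref{thm:poly-degree-branches}. Your explicit verification of the summability condition $\sum_{i\in J_\eta}\sum_{j=1}^m\mu(x_{i,j})<\infty$ needed for Lemma~\ref{lem:technical-sum-decomposition} is a point the paper leaves implicit, and it is a worthwhile addition.
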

	\begin{proof}
		Set
		$\gammab_i=\{\mu(x_{i,j+1})\}_{j=0}^\infty$
		for $i\in J_{[1,\eta]}$. Applying
		Proposition~\ref{lem:m-iso-characterization},
		substituting $x=x_{i,j}$, $i\in
		J_{[1,\eta]}$, $j\in\nbb$, into
		\eqref{wz1} and using \eqref{rndx11} and
		\eqref{Newt1} we deduce that
		$\triangle^m \gammab_i=0$ for all $i\in
		J_{[1,\eta]}$.
		Thus, by Lemma~\ref{lem:polynomial}, $\mu(x_{i,j+1})$ is a polynomial in $j$ of
		degree at most $m-1$ for all $i\in
		J_{[1,\eta]}$.
		It follows from Proposition~\ref{lem:m-iso-characterization} that \eqref{kvnui2bdv} holds, so by Proposition~\ref{thm:poly-degree-branches} the proof is completed.
	\end{proof}
	
	The following theorem, which is the main result of this section, establishes a characterization of $m$--isometric composition operators on directed graphs with one circuit.
	
	\begin{thm}
		\label{th:m-iso-characterization}
		Suppose \eqref{def:composition-operator} holds, $m$ is an integer such that $m\ge 2$ and $\cf \in \B[L^2(\mu)]$.
		Then the following conditions are equivalent{\em :}
		\begin{enumerate}
			\item $\cf$ is an $m$--isometry,
			\item  $\mu(x_{i,j+1})$ is a polynomial in $j$ of degree at most $m-2$ for all $i\in J_{[1,\eta]}$ and
			\begin{align}
			\label{wzo1} \sum_{p=0}^{m} (-1)^p
			\binom{m}{p}
			\bigg({\mu(x_{\Phi_2(p+r)})} +
			\sum_{i=1}^{\eta}\sum_{l=0}^{\Phi_1(p+r)-1}
			{\mu(x_{i,l\kappa+\Phi_2(p+r)})}
			\bigg)=0,\; r\in J_{[1,\kappa]}.
			\end{align}
		\end{enumerate}
	\end{thm}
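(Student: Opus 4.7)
The plan is to apply Lemma~\ref{lem:m-iso-characterization}, which reduces $m$-isometricity of $\cf$ to verifying $\sum_{k=0}^m (-1)^k\binom{m}{k}\hn{k}(x)=0$ at every $x\in X$. Since $X$ partitions into the circuit $\{x_1,\dots,x_\kappa\}$ and the branches $\{x_{i,j}\colon i\in J_\eta,\, j\in\nbb\}$, I would split the verification into these two types of points and then translate each into a statement about $\mu$ using the explicit Radon--Nikodym formulas \eqref{rndx11}, \eqref{hfnx0}, and \eqref{rndx0}.

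For the branch points $x=x_{i,j}$, formula \eqref{rndx11} rewrites the identity $\sum_p(-1)^p\binom{m}{p}\hn{p}(x_{i,j})=0$ as $\sum_p(-1)^p\binom{m}{p}\mu(x_{i,p+j})=0$ for every $j\in\nbb$, which by Lemma~\ref{lem:polynomial} is equivalent to $\mu(x_{i,j+1})$ being a polynomial in $j$ of degree at most $m-1$. For the circuit points $x=x_r$, combining \eqref{hfnx0} with \eqref{rndx0} yields
\begin{align*}
\mu(x_r)\hn{p}(x_r)=\mu(x_{\Phi_2(p+r)})+\sum_{i=1}^{\eta}\sum_{l=0}^{\Phi_1(p+r)-1}\mu(x_{i,l\kappa+\Phi_2(p+r)}),
\end{align*}
so after clearing the positive factor $\mu(x_r)^{-1}$ the identity $\sum_p(-1)^p\binom{m}{p}\hn{p}(x_r)=0$ becomes exactly \eqref{wzo1}.

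For the implication (i)$\Rightarrow$(ii), the circuit relation \eqref{wzo1} is immediate from the calculation above, while the strengthening of the branch polynomial degree from $m-1$ to $m-2$ is furnished by Theorem~\ref{th:m-iso-branches}, which was established precisely for this purpose. For (ii)$\Rightarrow$(i), the circuit relation \eqref{wzo1} gives \eqref{wz1} at each $x_r$ after dividing by $\mu(x_r)$, and the assumed bound on the branch polynomial degree ($\le m-2$, hence a fortiori $\le m-1$) combined with Lemma~\ref{lem:polynomial} gives \eqref{wz1} at every $x_{i,j}$; Lemma~\ref{lem:m-iso-characterization} then concludes $m$-isometricity.

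The nontrivial content is already encapsulated in Theorem~\ref{th:m-iso-branches}, namely that the branch sequences cannot be merely of degree $m-1$ once the circuit equalities are in force; the remaining work is bookkeeping with \eqref{rndx11}, \eqref{hfnx0}, and \eqref{rndx0}, so I do not expect any substantial obstacle beyond cleanly organizing the two cases.
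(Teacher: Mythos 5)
Your proposal is correct and takes essentially the same route as the paper: both reduce to Lemma~\ref{lem:m-iso-characterization}(ii), split $X$ into circuit and branch points, convert the circuit equations to \eqref{wzo1} via \eqref{hfnx0} and \eqref{rndx0} (the paper's identity \eqref{wz3}), handle the branches with \eqref{rndx11} and Lemma~\ref{lem:polynomial}, and invoke Theorem~\ref{th:m-iso-branches} to sharpen the branch degree from $m-1$ to $m-2$ in the forward direction.
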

	\begin{proof}
		(i)$\Rightarrow$(ii).
		Apply Theorem~\ref{th:m-iso-branches}, Proposition~\ref{lem:m-iso-characterization} and \eqref{wz3}.
		
		(ii)$\Rightarrow$(i).
		This can by proved by using \eqref{hfnx0}, \eqref{wz3}, \eqref{wzo1}, Proposition~\ref{lem:m-iso-characterization}, Lemma~\ref{lem:polynomial} and the following observation: if $a_j$ is a polynomial in $j$ of degree at most $k$, then $a_{j+n}$ is a polynomial in $j$ of degree at most $k$ for all $n\in\zbb_+$.
	\end{proof}
	
	Our next result is a consequence of Proposition \ref{thm:poly-degree-branches} and Theorem \ref{th:m-iso-characterization}.
	
	\begin{cor}
		\label{cor:m-iso-characterization-one-element}
		Suppose that
		\eqref{def:composition-operator} holds with
		$\kappa=1$, $\cf\in\B[L^2(\mu)]$ and $m$ is
		an integer such that $m\ge2$. Then $\cf$ is
		an $m$--isometry if and only if
		$\mu(x_{i,j+1})$ is a polynomial in $j$ of
		degree at most $m-2$ for all $i\in
		J_{[1,\eta]}$. Moreover, if there exists
		$i_0\in J_{[1,\eta]}$ such that
		$\mu(x_{i_0,j+1})$ is a polynomial in $j$
		of degree $m-2$, then $\cf$ is a strict
		$m$--isometry.
	\end{cor}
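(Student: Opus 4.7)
The plan is to derive Corollary~\ref{cor:m-iso-characterization-one-element} as a direct specialization of Theorem~\ref{th:m-iso-characterization} and Theorem~\ref{thm:poly-degree-branches} to the case $\kappa=1$, where the encoding functions collapse to $\Phi_2(n)=1$ and $\Phi_1(n)=n-1$, dramatically simplifying all of the relevant sums.

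First I would handle the equivalence. The forward implication is immediate from Theorem~\ref{th:m-iso-branches} (or equivalently from (i)$\Rightarrow$(ii) of Theorem~\ref{th:m-iso-characterization}): if $\cf$ is an $m$--isometry, then $\mu(x_{i,j+1})$ is a polynomial in $j$ of degree at most $m-2$ for every $i\in J_\eta$. For the reverse implication I would start from the polynomial hypothesis, observe that degree at most $m-2$ is a fortiori degree at most $m-1$, and apply Theorem~\ref{thm:poly-degree-branches} to obtain the identity $\sum_{r=1}^{\kappa}\mu(x_r)\sum_{p=0}^{m}(-1)^p\binom{m}{p}\hn{p}(x_r)=0$. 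Specializing to $\kappa=1$ and dividing by $\mu(x_1)>0$ leaves $\sum_{p=0}^{m}(-1)^p\binom{m}{p}\hn{p}(x_1)=0$; a short unwinding via \eqref{rndx0}, using $\Phi_1(p+1)=p$ and $\Phi_2(p+1)=1$, shows this is literally condition \eqref{wzo1} at the unique relevant value $r=1$ (the term $\mu(x_1)\sum_p(-1)^p\binom{m}{p}$ vanishes). The implication (ii)$\Rightarrow$(i) of Theorem~\ref{th:m-iso-characterization} then yields that $\cf$ is an $m$--isometry.

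For the strictness assertion I would split on $m$. When $m\ge 3$, supposing $\cf$ were $(m-1)$--isometric, the first half of the corollary applied with $m-1$ in place of $m$ would force $\mu(x_{i_0,j+1})$ to be a polynomial in $j$ of degree at most $m-3$, contradicting the hypothesis that it has degree exactly $m-2$. When $m=2$, the existence of $i_0$ forces $\eta\ge 1$, so by \eqref{rndx151} we have $\hn{}(x_1)=1+\sum_{i=1}^{\eta}\mu(x_{i,1})/\mu(x_1)>1$, precluding $\cf$ from being $1$--isometric. Either way $\cf$ is $m$--isometric but not $(m-1)$--isometric, hence strict.

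The only non-automatic step is the bookkeeping that identifies \eqref{wzo1} at $r=1$ with the $\kappa=1$ collapse of the identity from Theorem~\ref{thm:poly-degree-branches}. I expect this to be the main obstacle, modest though it is: one must be careful with the edge case $p=0$, where $\Phi_1(0+1)-1=-1$ forces the inner sum over $l$ to be empty, and the outer cancellation $\sum_{p=0}^{m}(-1)^p\binom{m}{p}=0$ must be invoked to absorb the constant contribution $\mu(x_1)$. Once this identification is in place, everything else is a direct appeal to the results already proved in the section.
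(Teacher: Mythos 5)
Your proposal is correct and follows exactly the route the paper indicates (the corollary is stated there without proof, only as "a consequence of Theorems~\ref{thm:poly-degree-branches} and \ref{th:m-iso-characterization}"): forward direction from Theorem~\ref{th:m-iso-branches}, reverse direction by feeding the vanishing sum from Theorem~\ref{thm:poly-degree-branches} (a single term when $\kappa=1$) into condition \eqref{wzo1} via \eqref{wz3}, and your strictness argument --- induction down to $m-1$ for $m\ge3$ and $\hn{}(x_1)>1$ for $m=2$ --- correctly fills in the "moreover" clause that the paper leaves implicit.
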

	
	Note that, under the assumptions of Corollary~\ref{cor:m-iso-characterization-one-element}, if $\cf\in\B[L^2(\mu)]$ is $m$--isometric, then for every $t\in\rbb_+$, $\cf\in\B[L^2(\mu_t)]$ is also $m$--isometric, where $\mu_t$ is the measure on $\ascr$ uniquely determined by
	\begin{align*}
	\mu_t(x)=
	\begin{cases}
	\mu(x)+t & \text{if } x=x_1, \\
	\mu(x) & \text{otherwise.}
	\end{cases}
	\end{align*}
	
	Below we show that bounded composition operators on directed graphs with one circuit separate the class of $m$--isometric operators for $m\ge 2$.
	
	\begin{exa}
		Suppose \eqref{def:composition-operator} holds with $\kappa=1$, $\eta\in J_{[1,\infty]}$ and $m\ge2$ is an integer.
		Let $q$ be a real polynomial of degree $m-2$ such that all coefficients of $q$ are nonnegative, and set $\mu(x_1)=1$ and
		\begin{align*}
		\mu(x_{i,j}) = 2^{-i} q(j), \quad i\in
		J_{[1,\eta]},\; j\in\nbb.
		\end{align*}
		Observe that by \eqref{rndx151},
		\begin{align*}
		\hn{}(x) =
		\begin{cases}
		1 + q(1)\sum_{i\in J_{[1,\eta]}} 2^{-i} & \text{if } x=x_{1}, \\[1ex]
		\frac{q(j+1)}{q(j)} & \text{if }
		x=x_{i,j} \text{ for some } i\in J_{[1,\eta]}
		\text { and } j\in\nbb.
		\end{cases}
		\end{align*}
		Since $\lim_{j\to\infty}\frac{q(j+1)}{q(j)}=1$, this and \eqref{eq:boundedness-condition} implies that $\cf\in\B[L^2(\mu)]$.
		It follows from Corollary~\ref{cor:m-iso-characterization-one-element}, that $\cf$ is strictly $m$--isometric.
	\end{exa}
	
	The result below provides a sufficient condition for a bounded composition operator on a directed graph with one circuit to be an $m$--isometry.
	
	\begin{pro}
		\label{pro:m-exp-m-iso}
		Suppose \eqref{def:composition-operator} holds, $\cf\in\B[L^2(\mu)]$ and $m\ge2$ is an integer.
		If $(-1)^m\bscr_m(\cf)\le 0$ then $\bscr_{m}(\cf)= 0$.
	\end{pro}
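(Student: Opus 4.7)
The plan is to lift the operator inequality to a pointwise inequality on the Radon-Nikodym derivatives, separate the analysis at the branch points $x_{i,j}$ from that at the circuit points $x_r$, and then tie the two together through Lemma~\ref{lem:technical-sum-decomposition}. First, testing $\bscr_m(\cf)$ against the characteristic function of a single atom (as in the proof of Lemma~\ref{lem:m-iso-characterization}) converts the hypothesis $(-1)^m\bscr_m(\cf)\le 0$ into the pointwise inequality
\[
(-1)^m \sum_{k=0}^m (-1)^k\binom{m}{k}\hn{k}(x)\le 0, \qquad x\in X,
\]
and \eqref{eq:newton-symbol-inequality} then supplies the companion inequality at order $m-1$, namely $(-1)^m\sum_{k=0}^{m-1}(-1)^k\binom{m-1}{k}\hn{k}(x)\le 0$ for all $x\in X$.

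Setting $\gammab_i=\{\mu(x_{i,j+1})\}_{j=0}^\infty$ for $i\in J_\eta$, substituting $x=x_{i,j+1}$ into the two pointwise inequalities and invoking \eqref{rndx11} and \eqref{Newt1} to rewrite the alternating sums as iterated differences, I obtain
\[
(\triangle^m\gammab_i)_n\le 0 \quad\text{and}\quad (\triangle^{m-1}\gammab_i)_n\ge 0, \qquad i\in J_\eta,\; n\in\zbb_+,
\]
so each $\triangle^{m-1}\gammab_i$ is a nonnegative, nonincreasing sequence. For the circuit points I would multiply the pointwise inequality at $x=x_r$ by $\mu(x_r)>0$, sum over $r\in J_\kappa$, and apply Lemma~\ref{lem:technical-sum-decomposition} (whose summability hypothesis follows from $\hn{j}\in L^\infty(\mu)$, which forces $\sum_{i\in J_\eta}\mu(x_{i,j})<\infty$ for $j\in J_m$). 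Rewriting the right-hand side by \eqref{Newt1} in terms of $(\triangle^{m-1}\gammab_i)_0$ and tracking the signs carefully yields $\sum_{i\in J_\eta}(\triangle^{m-1}\gammab_i)_0\le 0$. Combined with the nonnegativity obtained above, this forces $(\triangle^{m-1}\gammab_i)_0=0$ for every $i$, and then the nonincreasing, nonnegative nature of $\triangle^{m-1}\gammab_i$ gives $\triangle^{m-1}\gammab_i=0$ identically; in particular $\gammab_i$ is a polynomial of degree at most $m-2$ and $\triangle^m\gammab_i=0$.

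With these vanishings, the right-hand side of Lemma~\ref{lem:technical-sum-decomposition} equals zero, so the sum of the nonpositive quantities $\mu(x_r)(-1)^m\sum_{k=0}^m(-1)^k\binom{m}{k}\hn{k}(x_r)$ over $r\in J_\kappa$ is zero, forcing each summand and hence each alternating sum at $x_r$ to vanish. Combined with the corresponding equalities at the branch points (from $\triangle^m\gammab_i=0$), Lemma~\ref{lem:m-iso-characterization} gives $\bscr_m(\cf)=0$. The main obstacle is the sign bookkeeping when converting alternating sums into iterated differences: the conclusion hinges on having the signs line up so that Lemma~\ref{lem:technical-sum-decomposition} relates a sum of nonpositive quantities to a sum of nonnegative ones, and once that accounting is confirmed the rest is routine.
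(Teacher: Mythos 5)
Your argument is correct and follows essentially the same route as the paper's: reduce to the pointwise inequalities on $\hn{k}$, invoke \eqref{eq:newton-symbol-inequality} for the order-$(m-1)$ companion inequality, squeeze via Lemma~\ref{lem:technical-sum-decomposition} to force the vanishing at the circuit points and at the $x_{i,1}$, and then propagate the vanishing along each branch. Your phrasing of the last step as ``$\triangle^{m-1}\gammab_i$ is nonnegative, nonincreasing, and starts at $0$, hence vanishes identically'' is just the paper's induction on $j$ rewritten in difference-operator language, so no genuinely new idea or gap is involved.
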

	\begin{proof}
		Assume that $(-1)^m\bscr_m(\cf)\le 0$.
		This, \eqref{aa} and \cite[Lemma~2.1]{jablonski-2003} imply that
		\begin{align}
		\label{mexmis1}
		(-1)^m\sum_{k=0}^m (-1)^k \binom{m}{k} \hn{k}\le 0,
		\end{align}
		and hence, by \eqref{eq:newton-symbol-inequality},
		\begin{align}
		\label{mexmis2}
		(-1)^m\sum_{k=0}^{m-1} (-1)^k \binom{m-1}{k} \hn{k}\le 0.
		\end{align}
		Using Lemma~\ref{lem:technical-sum-decomposition} we deduce that
		\begin{align}
		\notag
		0\overset{\eqref{mexmis1}}\ge \sum_{r=1}^{\kappa} &\mu(x_r)(-1)^m\sum_{p=0}^{m} (-1)^p \binom{m}{p} \hn{p}(x_r) \\
		\notag
		&=-\sum_{i=1}^{\eta} \mu(x_{i,1}) (-1)^m\sum_{p=0}^{m-1} (-1)^p \binom{m-1}{p} \hn{p}(x_{i,1}) \overset{\eqref{mexmis2}}\ge0,
		\end{align}
		and thus, it follows from \eqref{mexmis1} and
		\eqref{mexmis2} that
		\begin{align}
		\label{mexmis4}
		\sum_{p=0}^{m} (-1)^p \binom{m}{p} \hn{p}(x_r)=0, \quad r\ \in J_{[1,\kappa]}, \\
		\label{mexmis3} \sum_{p=0}^{m-1} (-1)^p
		\binom{m-1}{p} \hn{p}(x_{i,1}) =0, \quad
		i\in J_{[1,\eta]}.
		\end{align}
		Observe now that
		\begin{align*}
		0 &\overset{\eqref{mexmis1}} \ge (-1)^m \sum_{p=0}^{m} (-1)^p \binom{m}{p} \hn{p}(x_{i,j}) \\
		&= (-1)^m\sum_{p=0}^{m-1} (-1)^p \binom{m-1}{p} \hn{p}(x_{i,j}) \\
		&-(-1)^m
		\frac{\mu(x_{i,j+1})}{\mu(x_{i,j})}
		\sum_{p=0}^{m-1} (-1)^p \binom{m-1}{p}
		\hn{p}(x_{i,j+1}), \quad i\in J_{[1,\eta]},\;
		j\in\nbb.
		\end{align*}
		This and an induction argument on $j$ implies that
		\begin{align*}
		0 & \overset{\eqref{mexmis2}}\ge(-1)^m\sum_{p=0}^{m-1} (-1)^p \binom{m-1}{p} \hn{p}(x_{i,j+1}) \\
		&\ge (-1)^m
		\frac{\mu(x_{i,1})}{\mu(x_{i,j+1})}
		\sum_{p=0}^{m-1} (-1)^p \binom{m-1}{p}
		\hn{p}(x_{i,1}) \overset{\eqref{mexmis3}}=0,
		\quad i\in J_{[1,\eta]},\; j\in\nbb,
		\end{align*}
		and consequently
		\begin{align*}
		\sum_{p=0}^{m-1} (-1)^p \binom{m-1}{p}
		\hn{p}(x_{i,j+1})=0, \quad i\in J_{[1,\eta]},\;
		j\in\nbb.
		\end{align*}
		Combined with \eqref{mexmis4}, \eqref{mexmis3} and Proposition~\ref{lem:m-iso-characterization}, this completes the proof.
	\end{proof}
	
	Since bounded $2$--isometric operators are completely hyperexpansive and completely hyperexpansive are $2$--expansive, the following corollary is an immediate consequence of Proposition~\ref{pro:m-exp-m-iso}.
	
	\begin{cor}
		\label{cor:complete-hyperex}
		If \eqref{def:composition-operator} holds and $\cf\in\B[L^2(\mu)]$ then $\cf$ is completely hyperexpansive if and only if it is $2$--isometric.
	\end{cor}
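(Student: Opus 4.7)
The plan is to observe that this corollary is almost immediate from Proposition~\ref{pro:m-exp-m-iso} combined with the standard implications between $2$--isometricity, complete hyperexpansivity, and $2$--expansivity that are recalled in the sentence preceding the statement.

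For the implication ``$2$--isometric $\Rightarrow$ completely hyperexpansive'' there is nothing to prove, since it holds for every bounded operator on a Hilbert space and is quoted in the excerpt just before the corollary. So I would concentrate on the nontrivial direction ``completely hyperexpansive $\Rightarrow$ $2$--isometric''. Assuming $\cf$ is completely hyperexpansive, by definition $\bscr_n(\cf)\le 0$ for all $n\in\nbb$; in particular, taking $n=2$, I have $\bscr_2(\cf)\le 0$, i.e.\ $(-1)^2\bscr_2(\cf)\le 0$.

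At this point I would invoke Proposition~\ref{pro:m-exp-m-iso} with $m=2$ (whose hypotheses \eqref{def:composition-operator} and $\cf\in\B[L^2(\mu)]$ are exactly the hypotheses of the corollary), to conclude that $\bscr_2(\cf)=0$, which is by definition the $2$--isometric condition. Combining the two directions yields the stated equivalence.

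There is no real obstacle: the whole content was extracted in Proposition~\ref{pro:m-exp-m-iso}, where the nontrivial step was the decomposition of $\sum_r\mu(x_r)\sum_p(-1)^p\binom{m}{p}\hn{p}(x_r)$ supplied by Lemma~\ref{lem:technical-sum-decomposition} together with \eqref{eq:newton-symbol-inequality}. The corollary merely specializes $m=2$ and observes that $2$--expansivity (which complete hyperexpansivity trivially implies) is enough to trigger the proposition.
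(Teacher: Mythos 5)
Your proof is correct and follows exactly the paper's route: the paper likewise derives the corollary immediately from Proposition~\ref{pro:m-exp-m-iso} with $m=2$, using that complete hyperexpansivity implies $2$--expansivity and that bounded $2$--isometries are completely hyperexpansive.
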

	
	Concerning Proposition~\ref{pro:m-exp-m-iso}, one may ask if $m\ge2$ and $\cf\in\B[L^2(\mu)]$ are such that $(-1)^m\bscr_m(\cf)\ge 0$ implies $\bscr_{m}(\cf)= 0$.
	The following example shows that this is not true.
	
	\begin{exa}
		Assume \eqref{def:composition-operator} holds with $\kappa=1$ and $\eta=1$ and suppose that $m\ge2$ is an integer.
		Let $p$ be a polynomial of degree $m-1$ such that $p(j)>0$ for all $j\in\nbb$.
		Set $\mu(x_{1, j})=p(j)$ for $j\in\nbb$ and $\mu(x_1)=1$.
		Then, by Corollary~\ref{cor:m-iso-characterization-one-element} the operator $\cf$ is strictly $(m+1)$--isometric, in particular, it is not $m$--isometric.
		It follows from Proposition~\ref{th:alternating-expansiveness} that $(-1)^m\bscr_m(\cf)\ge 0$.
	\end{exa}

	\section{$m$--isometric completion problem}

	In this section we discuss some results related to $m$--isometric completion problem.
	The counterparts of it for unilateral weighted shifts appeared in e.g. \cite{A-L-2016,J-J-K-S-2011}.
	In \cite[Sections 4 and 5]{J-J-K-S-2011} authors studied completely hyperexpansive and 2--isometric completion problems for unilateral weighted shifts.
	Abdullah and Le proved that for each sequence $\{a_n\}_{n=1}^m\subset\C\setminus\{0\}$ there exists an $(m+2)$--isometric unilateral weighted shift with weight sequence starting with $\{a_n\}_{n=1}^m$ (see \cite[Proposition~2.7]{A-L-2016}).
	In this section we focus also on the $m$--isometric completion problem for composition operators on directed graphs with one circuit.
	
	For further references we need a technical lemma.
	\begin{lem}
		\label{lem:completion-interpolation} Let
		$l\in\zbb_+$ and $\{b_n\}_{n=0}^{l}
		\subseteq(0,\infty)$. Then there exists
		$c\in(0,\infty)$ such that for every
		$t\in[c,\infty)$ there exists a polynomial
		$w_t$ in one indeterminate $x$ with real
		coefficients of degree $l+1$ such that
		$w_t(n) = b_n$ for $n\in J_{[0,l]}$,
		$w_t(l+1)=t$ and $w_t(n) > 0$ for $n \in
		J_{[l+2,\infty]}$.
	\end{lem}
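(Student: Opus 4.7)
The plan is to reduce the $l+2$ interpolation conditions to fixing a single scalar. I would write the desired polynomial in the form $w_t=p+\alpha(t)\,q$, where $p\in\rbb_l[x]$ is the unique Lagrange interpolant of $\{b_n\}_{n=0}^l$ at the nodes $\{0,1,\ldots,l\}$, and $q(x)=x(x-1)(x-2)\cdots(x-l)$ is the monic polynomial of degree $l+1$ vanishing exactly on those nodes. Any such combination automatically satisfies $w_t(n)=b_n$ for $n\in\{0,\ldots,l\}$, and the remaining condition $w_t(l+1)=t$ uniquely determines $\alpha(t)=\frac{t-p(l+1)}{(l+1)!}$. In particular, since $q$ is monic of degree $l+1$, the polynomial $w_t$ has degree exactly $l+1$ whenever $\alpha(t)\ne 0$.

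Next, for any integer $n\ge l+1$ all factors of $q(n)$ are positive, so $q(n)>0$. Thus for $n\ge l+2$ one can factor
$$w_t(n)=q(n)\Bigl(\alpha(t)+\frac{p(n)}{q(n)}\Bigr),$$
and the sign of $w_t(n)$ is determined by the bracketed expression. Because $\deg p\le l<l+1=\deg q$ and $q(n)\to\infty$, the ratios $p(n)/q(n)$ tend to $0$ as $n\to\infty$; hence the sequence $\{p(n)/q(n)\}_{n\ge l+2}$ is bounded, and I would let $M\ge 0$ be an upper bound for its absolute value.

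The threshold is then forced: I would set $c:=\max\{1,\,p(l+1)+(M+1)(l+1)!\}$, which lies in $(0,\infty)$. For any $t\ge c$ the definition of $\alpha(t)$ gives $\alpha(t)\ge M+1$, so simultaneously $\alpha(t)\ne 0$ (yielding degree exactly $l+1$) and $\alpha(t)+p(n)/q(n)\ge M+1-M>0$ for every $n\ge l+2$ (yielding $w_t(n)>0$ there). No serious obstacle is anticipated; the only delicate point is the uniform boundedness of $\{p(n)/q(n)\}$ on $\{l+2,l+3,\ldots\}$, which is immediate from the degree comparison.
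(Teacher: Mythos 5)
Your argument is correct. Writing $w_t=p+\alpha(t)q$ with $p$ the Lagrange interpolant on $\{0,\dots,l\}$ and $q(x)=x(x-1)\cdots(x-l)$ handles the interpolation constraints in one stroke; the normalization $\alpha(t)=\frac{t-p(l+1)}{(l+1)!}$ is forced by $q(l+1)=(l+1)!$; positivity of $q$ at integers $n\ge l+1$ and the boundedness of $\{p(n)/q(n)\}_{n\ge l+2}$ (a sequence tending to $0$ by the degree gap) give a uniform threshold $c$ beyond which $\alpha(t)$ dominates, so $w_t(n)>0$ for $n\ge l+2$ and $\deg w_t=l+1$. All the conditions of the lemma are verified, and the edge cases (sign of $p(l+1)+(M+1)(l+1)!$, nonvanishing of $\alpha(t)$) are covered by your choice $c=\max\{1,\,p(l+1)+(M+1)(l+1)!\}$.

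This is, however, a genuinely different route from the paper's. The paper proceeds by induction on $l$: given a polynomial $v\in\rbb_{l+1}[x]$ furnished by the induction hypothesis, it perturbs $v$ by a term $\bigl(\beta|\alpha|(x-l-1)+\alpha\bigr)\prod_{j=0}^{l}(x-j)\big/\prod_{j=0}^{l}(l+1-j)$ with $\alpha=b_{l+1}-v(l+1)$, and then realizes all sufficiently large values at the new node by continuity of $\beta\mapsto w_\beta(l+2)$ together with $\lim_{\beta\to\infty}w_\beta(l+2)=+\infty$. Your construction is non-inductive and yields an explicit admissible threshold $c$, whereas the paper's existence of $c$ comes out of a limiting/continuity argument; on the other hand, the paper's inductive bookkeeping keeps positivity at the intermediate node $l+1$ visible at each stage, which your decomposition gets for free since $w_t(l+1)=t>0$. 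Both proofs are sound; yours is arguably the more transparent of the two.
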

	\begin{proof}
		We use induction on $l$. The
		case $l=0$ is obvious. Assume that lemma
		holds for a fixed unspecified $l\in\nbb$
		and let
		$\{b_n\}_{n=0}^{l+1}\subseteq(0,\infty)$.
		By the induction hypothesis there exists a
		polynomial $v$ in one indeterminate $x$
		with real coefficients of degree $l+1$ such
		that $v(n) = b_n$ for $n\in J_{[0,l]}$,
		$v(l+1)\neq b_{l+1}$ and $v(n) > 0$ for $n
		\in J_{[l+1,\infty]}$. For
		$s\in[1,\infty)$, let
		\begin{align*}
		w_s(x)=v(x)+\Big(s|\alpha|(x-l-1)+{\alpha}\Big)
		\frac{\prod_{j=0}^{l}(x-j)}{\prod_{j=0}^{l}(l+1-j)},
		\quad x\in \rbb,
		\end{align*}
		where $\alpha = b_{l+1}-v(l+1)$. It is
		easily seen that $w_s$ is a polynomial in
		one indeterminate $x$ with real
		coefficients of degree $l+2$ and for any
		$s\ge1$, $w_s(n)=b_n$ for $n\in
		J_{[0,l+1]}$ and $w_s(n) > 0$ for $n\in
		J_{[l+2,\infty]}$. Since the function
		$\psi\colon [1,\infty)\ni s\to w_s(l+2)\in
		\rbb_+$ is continuous and
		$\lim_{s\to\infty}\psi(s)=+\infty$, the
		proof is completed.
	\end{proof}

	For $k\in J_{[1,\infty]}$ define a transformation
	$\beta\colon\rbb^{J_{[1,k]}}\to \rbb^{J_{[0,k]}}$
	by
	\begin{align*} 
	(\beta \gammab)_n=\prod_{i=1}^n\gamma^2_{i},\quad
	n\in J_{[0,k]},\,
	\gammab=\{\gamma_n\}_{n=1}^k\in\rbb^{J_{[1,k]}}.
	\end{align*}
	Recall that an operator $S\in\B[\ell_2]$ is called a \textit{unilateral weighted shift} with weights $\{\lambda_n\}_{n=1}^\infty\subseteq\C\setminus\{0\}$, if $Se_n = \lambda_{n+1}e_{n+1}$ for all $n\in\zbb_+$, where $\{\lambda_n\}_{n=1}^\infty$ is a bounded sequence.
	It is a matter of a straightforward verification to see that these operators are bounded and injective.
	
	Let $\mathcal{C}$ be a class of operators.
	For a classical weighted shift, the completion problem within the class $\mathcal{C}$ entails determining whether or not a given initial finite sequence of positive weights may be extended to the sequence of weights of an injective, bounded unilateral weighted shift which belongs to the class $\mathcal{C}$; such a shift is called a $\mathcal{C}$ class completion of the initial weight sequence.
	
	Now we provide the solution of the completion
	problem within the class of $m$--isometries for
	the classical weighted shift (cf.\
	\cite[Proposition~2.7]{A-L-2016}). Since a
	unilateral weighted shift with weights
	$\{\lambda_n\}_{n=1}^\infty\subseteq\C\setminus\{0\}$
	is unitarily equivalent to the shift with weight
	sequence
	$\{|\lambda_n|\}_{n=1}^\infty\subseteq(0,\infty)$,
	the following result can be generalized to
	unilateral weighted shifts with non-zero weights.

	\begin{pro}
		\label{pro:completion-shifts} Let
		$m,k\in\nbb$ and let
		$\tilde\lambdab:=\{\lambda_n\}_{n=1}^{k}\subseteq(0,
		\infty)$.
		Then
		\begin{enumerate}
			\item if $k\le m-2$, then there exists $c>0$ such
			that for every $r\in[c,\infty)$ the sequence
			$\{\lambda_n\}_{n=1}^{m-1}$ with an arbitrary
			sequence
			$\{\lambda_n\}_{n=k+1}^{m-2}\subseteq(0,\infty)$
			and $\lambda_{m-1}=r$ admits a strict
			$m$--isometric completion,
			\item if $k > m-2$, then $\tilde\lambdab$
			admits an $m$--isometric completion if and only
			if $\sum_{j=0}^{m} (-1)^j\binom {m}j (\beta
			\tilde\lambdab)_{n+j}=0$ for $n\in J_{[0,k-m]}$
			and the unique polynomial $w$ of degree at most
			$m-1$ which satisfies the conditions $w(n)=(\beta
			\tilde\lambdab)_{n}$ for $n\in J_{[0,m-1]}$, also
			satisfies $w(n)>0$ for all $n\in J_{[m,\infty]}$.
			Moreover, this completion is strict if and only
			if the degree of $w$ is equal to $m-1$.
		\end{enumerate}
	\end{pro}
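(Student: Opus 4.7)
The overarching strategy is to translate everything through Lemma~\ref{lem:m-iso-shift}: a positive sequence $\{a_n\}_{n=0}^\infty$ yields an $m$-isometric unilateral weighted shift exactly when there exists $w\in\rbb_{m-1}[x]$ with $w(0)=1$, $w(n)>0$ for all $n\in\nbb$, and $w(n)=\prod_{i=0}^{n-1}a_i^2$ for every $n\in\nbb$, with strict $m$-isometricity corresponding to $\deg w=m-1$. Under this dictionary the completion problem becomes a polynomial interpolation/positivity question for the partial product sequence $b_n:=\prod_{i=0}^{n-1}a_i^2$.

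For part~(i) the inequality $k+3\le m$ means the prescribed $b_0,\ldots,b_{k+1}$ constrain at most $m-1$ values of $w$, so I am free to choose $a_{k+1},\ldots,a_{m-3}$ arbitrarily, compute $b_n$ for $n\in\{0,\ldots,m-2\}$, and invoke Lemma~\ref{lem:completion-interpolation} with $l=m-2$. That lemma supplies a threshold $c'>0$ and, for every $t\ge c'$, a polynomial $w_t\in\rbb_{m-1}[x]$ of degree exactly $m-1$ interpolating $b_0,\ldots,b_{m-2}$ at $0,\ldots,m-2$, satisfying $w_t(m-1)=t$, and positive on $\{m,m+1,\ldots\}$. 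Requiring $a_{m-2}=r$ forces $w_t(m-1)=r^2 b_{m-2}$, so I set $c:=\sqrt{c'/b_{m-2}}$; for every $r\ge c$ the weights $a_n:=\sqrt{w_t(n+1)/w_t(n)}$, $n\ge m-1$, furnish a strict $m$-isometric completion, with boundedness automatic from $a_n^2\to 1$ as $n\to\infty$.

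For part~(ii) I treat necessity and sufficiency separately. Necessity: any $m$-isometric completion yields, via Lemma~\ref{lem:m-iso-shift}, a polynomial $\tilde w\in\rbb_{m-1}[x]$ whose values at $0,1,\ldots,m-1$ match those of the statement's $w$; since such a polynomial is determined by $m$ values, $\tilde w=w$, whence $w(n)=\prod_{i=0}^{n-1}a_i^2>0$ for every $n\in\{m,m+1,\ldots\}$, and in particular for $n\in\{m,\ldots,k+1\}$. These last equalities, via Newton's identity~\eqref{Newt1} and Lemma~\ref{lem:polynomial} applied to the truncated sequence $\{w(n)\}_{n=0}^{k+1}$, translate into the displayed combinatorial identity on $\{0,\ldots,k+1-m\}$. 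Sufficiency: using positivity of $w$ on $\{m,m+1,\ldots\}$ I define $a_n:=\sqrt{w(n+1)/w(n)}$ for $n\ge k+1$; the consistency conditions guarantee $w(n)=\prod_{i=0}^{n-1}a_i^2$ for all $n\in\nbb$, so Lemma~\ref{lem:m-iso-shift} completes the construction and boundedness is once again automatic from $a_n^2\to 1$. The ``moreover'' clause reads off the strict part of Lemma~\ref{lem:m-iso-shift}.

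The step requiring the most care is the index bookkeeping in part~(ii): matching the displayed difference-equation condition on $\{0,\ldots,k+1-m\}$ with the $\triangle^m$-annihilation of the truncated sequence $\{w(n)\}_{n=0}^{k+1}$, and verifying that the newly defined $a_n$ for $n\ge k+1$ meshes consistently with the prescribed $a_0,\ldots,a_k$ (i.e.\ that the interpolating $w$ really does recover the given weights through $a_n^2=w(n+1)/w(n)$). Everything else---positivity propagation, the asymptotic $a_n^2\to 1$ giving boundedness, and the strict/non-strict dichotomy---follows directly from Lemmas~\ref{lem:m-iso-shift} and~\ref{lem:completion-interpolation}.
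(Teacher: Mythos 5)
Your proof is correct and follows essentially the same route as the paper's: part (i) via Lemma~\ref{lem:completion-interpolation} applied to the partial products $b_n=\prod_{i=0}^{n-1}a_i^2$ with $l=m-2$, and part (ii) via Lemma~\ref{lem:m-iso-shift}(iii) combined with the uniqueness of Lagrange interpolation and Lemma~\ref{lem:polynomial}; you merely spell out the necessity/sufficiency bookkeeping that the paper dismisses as routine. The only caveat is that the displayed identity in (ii) must be read as the $\triangle^m$-annihilation of the partial-product sequence $\{\prod_{i=0}^{n-1}a_i^2\}_{n=0}^{k+1}$ (which is what your argument actually establishes), not literally as a condition on the weights $a_n$ themselves as printed.
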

	\begin{proof}
		(i). If $k < m-2$, let us fix an arbitrary
		sequence $\{\lambda_n\}_{n=k+1}^{m-2}
		\subseteq(0,\infty)$. Let
		$\hat\lambdab:=\{\lambda_n\}_{n=1}^{m-2}$. It
		follows from
		Lemma~\ref{lem:completion-interpolation} applied
		to the sequence $\{(\beta
		\hat\lambdab)_n\}_{n=0}^{m-2}$ that there exists
		$d\in(0,\infty)$ such that for every
		$t\in[d,\infty)$ there exists a polynomial $w_t$
		in one indeterminate $x$ with real coefficients
		of degree $m-1$ such that $w_t(n) = (\beta
		\hat\lambdab)_n$ for $n\in J_{[0,m-2]}$,
		$w_t(m-1)=t$ and $w_t(n)
		> 0$ for $n\in J_{[m,\infty]}$. Let
		$\lambda_n=\sqrt\frac{w_t(n)}{w_t(n-1)}$ for
		$n\in J_{[m-1,\infty]}$ and let $S$ be the
		unilateral weighted shift with weights
		$\lambdab:=\{\lambda_n\}_{n=1}^\infty$. Since
		$\{(\gammab_{S,e_0})_n\}_{n=0}^{\infty}
		=\{w_t(n)\}_{n=0}^\infty$, it follows from
		\cite[Theorem~2.1]{A-L-2016} that $S$ is a strict
		$m$--isometric completion of the weight sequence
		$\{\lambda_n\}_{n=1}^k$. Now it is a routine
		matter to check that (i) holds.
		
		(ii). In view of \cite[Theorem~2.1]{A-L-2016} and the
		uniqueness part of the Lagrange interpolation
		formula (see \cite[Section 2.5]{davis-1963}),
		$\tilde\lambdab$ admits an $m$--isometric
		completion if and only if there exists a
		polynomial $w$ in one indeterminate $x$ with real
		coefficients of degree at most $m-1$ such that
		$w(n)=(\beta\tilde\lambdab)_n$ for $n\in
		J_{[0,m-1]}$ and $w(n)>0$ for $n\in
		J_{[m,\infty]}$. Now, Lemma~\ref{lem:polynomial}
		completes the proof.
		
		The moreover part follows directly from \cite[Corollary~2.3]{A-L-2016}.
	\end{proof}
	
	We present an example which shows that for each the $m\ge2$ there exists $\{\lambda_n\}_{n=1}^{m}\subseteq(0,\infty)$ such that $(m+1)$--isometric completion problem does not have a solution (cf.\ \cite[Remark~2.8]{A-L-2016}).
	
	\begin{exa}
		First, assume that $m\ge2$ is an odd number.
		Let
		\begin{align*}
		w_m(x) = -\frac{(x-(m+1))^m}{(m+1)^m}.
		\end{align*}
		Since $w_m(i)>0$ for $i=0,\dots,m$, the sequence $\{\lambda_n\}_{n=1}^{m}$ given by the formula
		\begin{align*}
		\lambda_i = \sqrt{\frac{w_m(i)}{w_m(i-1)}}, \quad i\in J_{[1,m]},
		\end{align*}
		is well-defined and consists of positive real numbers.
		This implies that
		\begin{align*}
		w_m(n)=\prod_{i=1}^{n} \lambda_i^2, \quad n\in J_{[0,m]}.
		\end{align*}
		Combined with Proposition~\ref{pro:completion-shifts}(ii), this implies that there is no solution of the $(m+1)$--isometric completion problem for $\{\lambda_n\}_{n=1}^{m}$.
		
		If $m$ is even, then it is enough to consider
		\begin{align*}
		w_m(x) = -\frac{(x-(m+1))^{m-1}(x+(m-1))}{(m+1)^{m-1}(m-1)}.
		\end{align*}
		We leave the details to the reader.
	\end{exa}
	
	Now we concentrate on the completion problem for $m$--isometric composition operators on directed graphs with one circuit.
	
	\begin{pro}
		\label{pro:completion-composition} Suppose
		$m$, $\kappa\in\zbb$ are such that
		$\kappa>m\ge2$. Let $\eta$, $X$, $\ascr$
		and $\phi$ be as in
		\eqref{def:composition-operator},
		$M\in(0,\infty)$ and $\{w_i\}_{i=1}^\eta$ be a system of polynomials
		of degree at most $m-2$ such that
		$w_i(j)>0$ for all $i\in J_{[1,\eta]}$ and
		$j\in\nbb$ and
		\begin{align}
		\label{eq:completion-boundedness}
		\max\bigg\{\sum_{i=1}^\eta w_i(1),
		\sup\Big\{\frac{w_i(j+1)}{w_i(j)}\colon
		i\in J_{[1,\eta]},\; j\in
		\nbb\Big\}\bigg\}\le M.
		\end{align}
		Then there exists a measure $\mu$ on $\ascr$
		such that $\mu(x_{i,j})=w_i(j)$ for $i\in
		J_{[1,\eta]}$, $j\in\nbb$ and $\cf\in
		\B[L^2(\mu)]$ is an
		$m$--isometry. Moreover, there exists
		$t_0\in\rbb$ such that all measures $\mu_t$ having the above properties
		can be parameterized by
		$t\in(t_0,\infty)$, where
		\begin{align*}
		\mu_t(x):=\begin{cases}
		\mu(x)+t & \text{if } x=x_j \text{ with } j\in J_{[1,\kappa]}, \\
		\mu(x) & \text{otherwise.} \\
		\end{cases}
		\end{align*}
	\end{pro}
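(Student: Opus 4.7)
The plan is to convert the $m$--isometry of $\cf$ into a single polynomial functional equation that encodes the unknown values $\mu(x_1),\ldots,\mu(x_\kappa)$. By Lemma~\ref{lem:m-iso-characterization}(iv) combined with \eqref{rndx11}, on the branches $\hn{n}(x_{i,j})=w_i(n+j)/w_i(j)$ is automatically a polynomial in $n$ of degree at most $m-2$, and by \eqref{hfnx0} the polynomial property of $\hn{n}(x_1)$ propagates to every $\hn{n}(x_r)$. So the task reduces to choosing $\mu(x_r)>0$ for $r\in J_\kappa$ in such a way that $\{\hn{n}(x_1)\}_{n\ge 0}$ is a polynomial in $n$ of degree at most $m-1$.

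Writing $n=l\kappa+s-1$ with $l\in\zbb_+$, $s\in J_\kappa$, and setting $P(n):=\mu(x_1)\hn{n}(x_1)$ and $W(j):=\sum_{i\in J_\eta}w_i(j)$, formula \eqref{rndx0} gives $P(n)=\mu(x_s)+\sum_{k=0}^{l-1}W(k\kappa+s)$. The boundedness hypothesis \eqref{eq:completion-boundedness} yields $w_i(k)\le M^{k-1}w_i(1)$, hence $W(k)\le M^k<\infty$; Lagrange interpolation at the nodes $\{1,\ldots,m-1\}$, together with Tonelli (valid because $w_i\ge 0$), gives $W(k)=\sum_{s=1}^{m-1}W(s)L_s(k)$, so $W\in\rbb_{m-2}[x]$. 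Requiring $P=\tilde q$ on $\zbb_+$ for some $\tilde q\in\rbb_{m-1}[x]$ is then equivalent, on separating $l=0$ and $l\ge 1$, to (a) $\mu(x_r)=\tilde q(r-1)$ for $r\in J_\kappa$, and (b) $\tilde q(x+\kappa)-\tilde q(x)=W(x+1)$, which, being an equality between two polynomials of degree at most $m-2$ holding on $\zbb_+$, is a polynomial identity.

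The linear map $T\colon\rbb_{m-1}[x]\to\rbb_{m-2}[x]$ defined by $T\tilde q(x):=\tilde q(x+\kappa)-\tilde q(x)$ has kernel equal to the constants (a periodic polynomial must be constant) and hence, by dimension count ($m$ against $m-1$), is surjective. Thus (b) has solutions forming a one-dimensional affine family $\{\tilde q_*+c\colon c\in\rbb\}$. Choosing $c$ so that $\tilde q_*(r-1)+c>0$ for every $r\in J_\kappa$ and setting $\mu(x_r):=\tilde q_*(r-1)+c$ yields, together with the prescribed $\mu(x_{i,j})=w_i(j)$, a measure on $(X,\ascr)$; membership $\cf\in\B[L^2(\mu)]$ follows from \eqref{eq:boundedness-condition} because $\hn{}\le M$ on the branches and $\hn{}$ takes only finitely many finite values on the $\kappa$-point circuit, so Theorem~\ref{th:m-iso-characterization} then delivers the $m$--isometry.

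For the moreover part, the translation $\tilde q_*+c\mapsto\tilde q_*+c+t$ corresponds exactly to the prescription $\mu(x_r)\mapsto\mu(x_r)+t$ for $r\in J_\kappa$, so every $m$--isometric solution is of the form $\mu_t$ for some $t\in\rbb$, and the positivity requirement $\mu_t(x_r)>0$ for all $r\in J_\kappa$ is equivalent to $t>-\min_{r\in J_\kappa}\mu(x_r)=:t_0\in\rbb$. The main obstacle is the identification of $W$ as a polynomial in $\rbb_{m-2}[x]$ when $\eta=\infty$, because this is the only step where the analytic bound \eqref{eq:completion-boundedness} must be traded for polynomial structure; once that is settled, the surjectivity of $T$ and the positivity adjustment are routine.
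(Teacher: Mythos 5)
Your proof is correct, but it follows a genuinely different route from the paper's. The paper writes the circuit condition \eqref{wzo1} as a $\kappa\times\kappa$ circulant linear system in the unknowns $\mu(x_1),\ldots,\mu(x_\kappa)$ with coefficients $(-1)^p\binom{m}{p}$, invokes Ingleton's rank formula for circulant matrices (the associated polynomial is $(x-1)^m$ and $\gcd(1-x^\kappa,(x-1)^m)$ has degree $1$, whence rank $\kappa-1$), uses Theorem~\ref{thm:poly-degree-branches} to see that the augmented matrix has the same rank, and reads off the one--dimensional affine solution set with kernel direction $[1,\ldots,1]$; this is where the hypothesis $\kappa>m$ enters, since it guarantees the displayed band--circulant form. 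You instead translate the $m$--isometry, via Lemma~\ref{lem:m-iso-characterization}(iv), \eqref{hfnx0} and \eqref{rndx0}, into the single functional equation $\tilde q(x+\kappa)-\tilde q(x)=W(x+1)$ for $\tilde q\in\rbb_{m-1}[x]$, and solve it by rank--nullity on polynomial spaces, the kernel (constants) producing the one--parameter family directly. Your route has two advantages: it does not actually use $\kappa>m$, and it makes explicit the positivity and boundedness checks that the paper leaves implicit; the price is the extra step of verifying that $W=\sum_{i\in J_\eta}w_i$ is itself a polynomial of degree at most $m-2$ when $\eta=\infty$, which you correctly extract from \eqref{eq:completion-boundedness}. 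Two small points of hygiene: the interchange of the sums $\sum_i\sum_s w_i(s)L_s(k)$ is justified by absolute summability (the Lagrange basis values $L_s(k)$ change sign, so it is not literally Tonelli, but $\sum_i w_i(s)\le M^s<\infty$ for the finitely many nodes $s$ settles it); and in the ``moreover'' part you should note, as you essentially do, that for every $t>t_0$ the translated measure still yields a bounded operator because the circuit has only finitely many points and the branch part of $\hn{}$ is unchanged.
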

	\begin{proof}
		In view of \eqref{eq:boundedness-condition}, \eqref{rndx151} and Theorem~\ref{th:m-iso-characterization} any solution of \eqref{wzo1} fulfills our requirements.
		Observe that \eqref{wzo1} can be written in the following matrix form
		\begin{align}
		\label{eq:completion-matrix-equation}
		\begin{bmatrix}
		a_0 & a_1 & \cdots & a_{\kappa-2} & a_{\kappa-1}   \\
		a_{\kappa-1} & a_0 & a_1   &  & a_{\kappa-2}   \\
		\vdots  & \ddots & \ddots & \ddots &\vdots  \\
		a_2  &  & \ddots & a_0 & a_1 \\
		a_1 & a_2 & \cdots & a_{\kappa-1} & a_0  \\
		\end{bmatrix}
		\begin{bmatrix}
		\mu(x_1) \\
		\mu(x_{2}) \\
		\vdots \\
		\vdots \\
		\mu(x_{\kappa}) \\
		\end{bmatrix} =
		\begin{bmatrix}
		b_1 \\
		b_2 \\
		\vdots \\
		\vdots \\
		b_k \\
		\end{bmatrix},
		\end{align}
		where $a_p=(-1)^{p}\binom{m}{p}$ for $p\in J_{[0,m]}$, $a_p=0$ for $p>m$ and
		\begin{align*}
		b_r=\sum_{p=0}^m(-1)^{p+1}\binom{m}{p}\sum_{i=1}^{\eta}\sum_{l=0}^{\Phi_1(p+r)-1}
		{\mu(x_{i,l\kappa+\Phi_2(p+r)})}, \quad
		r\in J_{[1,\kappa]}.
		\end{align*}
		Since the matrix in \eqref{eq:completion-matrix-equation}, call it $A$, is circulant, it follows from \cite[Proposition~1.1]{ingleton-1956} that the rank of $A$ is equal to $\kappa-1$ because the associated polynomial of $A$ is  $(x-1)^m$ and the degree of the greatest common divisor of $1-x^\kappa$ and $(x-1)^m$ is equal to $1$.
		By Proposition~\ref{thm:poly-degree-branches}, the rank of the augmented matrix of \eqref{eq:completion-matrix-equation} is equal to $\kappa-1$, hence the solutions of the system \eqref{eq:completion-matrix-equation} form one dimensional affine subspace of $\rbb^\kappa$.
		Note that the vector $v=[t,\ldots,t]$ for $t\in\rbb$ satisfies equality $Av=0$. The proof is completed.
	\end{proof}
	
	By using similar reasoning we prove the following useful characterization of $2$--isometric and $3$--isometric composition operators on directed graphs with one circuit, which provides an explicit description of measure $\mu$ on $X$.
	
	\begin{thm}
		\label{th:characterization_3_isom}
		Suppose \eqref{def:composition-operator} holds with $\kappa>1$.
		Then the following are equivalent{\em :}
		\begin{enumerate}
			\item $\cf \in \B[L^2(\mu)]$ is $3$--isometric,
			\item there exist $M$, $t\in(0,\infty)$, a system of polynomials $\{w_i\}_{i=1}^\eta$ of degree at most $1$ and two systems $\{c_i\}_{i=1}^\eta\subseteq(0,\infty)$ and
			$\{d_i\}_{i=1}^\eta\subseteq\rbb_+$ such that
			\eqref{eq:completion-boundedness} holds,
			$c:=\sum_{i=1}^\eta c_i<\infty$,
			$d:=\sum_{i=1}^\eta d_i<\infty$ and
			\begin{align} \notag
			\mu(x_{i,j})&=c_i+d_i(j-1), \quad i\in J_{[1,\eta]},\;j\in\nbb, \\
			\label{defmu2}
			\mu(x_i)&=w_{c,d}^{\kappa,t}(i),
			\quad i\in J_{[1,\kappa]},
			\end{align}
			where $w_{c,d}^{\kappa,t}$ is a polynomial of
			degree at most $2$ given by
			\begin{align}
			\label{eq:3-iso-polynomial}
			w_{c,d}^{\kappa,t}(x)=\frac{d}{2\kappa}x^2 +\Big(\frac{c}{\kappa}-\frac{\kappa+2}{2\kappa}d\Big)x - c + d +t, \quad x\in\rbb.
			\end{align}
		\end{enumerate}
		Moreover, $\cf$ is $2$--isometric if and
		only if {\em (ii)} holds with a system of
		polynomials $\{w_i\}_{i=1}^\eta$ of
		degree equal to $0$ and with $d_i=0$ for all
		$i\in J_{[1,\eta]}$.
	\end{thm}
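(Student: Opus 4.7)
The plan is to apply Theorem~\ref{th:m-iso-characterization} with $m=3$, which reduces the equivalence (i)$\Leftrightarrow$(ii) to checking two conditions: that $\mu(x_{i,j+1})$ is a polynomial in $j$ of degree at most~$1$ for each $i\in J_\eta$, and that the circuit equation \eqref{wzo1} with $m=3$ holds for every $r\in J_\kappa$. The branch condition is equivalent to $\mu(x_{i,j})=c_i+d_i(j-1)$ with $c_i>0$ (by positivity of $\mu$) and $d_i\geq 0$ (otherwise $\mu(x_{i,j})$ would eventually be nonpositive). Boundedness of $\cf$, hence of $\cf^2$, combined with \eqref{eq:boundedness-condition}, \eqref{rndx151}, and \eqref{rndx150} applied at $x_\kappa$ yields $c:=\sum_i c_i<\infty$ and $\sum_i(c_i+d_i)<\infty$, whence $d:=\sum_i d_i<\infty$; the same boundedness restricted to branches gives \eqref{eq:completion-boundedness}.

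The heart of the argument is identifying the explicit form of $\mu(x_i)$ on the circuit. By Lemma~\ref{lem:m-iso-characterization}(iv), $\{\hn{n}(x_1)\}_n$ is a polynomial in $n$ of degree at most~$2$. Substituting $\mu(x_{i,j})=c_i+d_i(j-1)$ into \eqref{rndx0} and writing $n+1=N\kappa+k$ with $N=\Phi_1(n+1)$ and $k=\Phi_2(n+1)\in J_\kappa$, a direct evaluation of the arithmetic-type sum yields
\begin{align*}
\hn{n}(x_1)\,\mu(x_1)=\mu(x_k)+Nc+\tfrac{d\kappa N(N-1)}{2}+dN(k-1).
\end{align*}
Equating this to a polynomial $An^2+Bn+D$ in $n=N\kappa+k-1$ and matching the coefficients of $N^2$, $N$, and the constant in the $(N,k)$-decomposition forces $A=d/(2\kappa)$ and $B=c/\kappa-d/2$; then setting $N=0$ (so $n=k-1$) gives $\mu(x_k)=A(k-1)^2+B(k-1)+D$, which upon expansion coincides with $w_{c,d}^{\kappa,t}(k)$ of \eqref{eq:3-iso-polynomial}, with $t$ uniquely determined by the free constant $D$. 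Direct substitution $k=\kappa$ collapses $w_{c,d}^{\kappa,t}(\kappa)$ to $t$, so $\mu(x_\kappa)=t$ and $t>0$ by positivity of $\mu$.

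The reverse implication (ii)$\Rightarrow$(i) is obtained by reversing this computation: the explicit formulas make the right-hand side of \eqref{rndx0} a polynomial in $n$ of degree at most~$2$, so $\cf$ is $3$--isometric by Lemma~\ref{lem:m-iso-characterization}(iv); boundedness follows at once from \eqref{eq:completion-boundedness}, finiteness of $c$, and finiteness of $\kappa$. The moreover statement specializes the same argument to $m=2$ via Theorem~\ref{th:m-iso-characterization}: $\cf$ is $2$--isometric iff $\mu(x_{i,j+1})$ is constant in $j$ for each $i$, i.e.\ $d_i=0$ for all $i\in J_\eta$; then $d=0$ and $w_{c,d}^{\kappa,t}$ degenerates to a linear polynomial. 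The main obstacle I expect is the coefficient-matching step above: one must verify that the $k$-dependent constant term of the polynomial identity in $(N,k)$ is consistent for all $k\in J_\kappa$ precisely when $\mu(x_k)$ has the form \eqref{eq:3-iso-polynomial}.
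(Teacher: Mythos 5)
Your proof is correct, but it takes a genuinely different route from the paper. The paper leans on the machinery it has just built for the completion problem: by the circulant-matrix rank computation in the proof of Proposition~\ref{pro:completion-composition}, the solution set of the circuit system \eqref{wzo1} is a one-dimensional affine subspace of $\rbb^\kappa$ with direction vector $[t,\ldots,t]$, so it suffices to check (via $\Delta^3 w_{c,d}^{\kappa,t}=0$) that the vector $[w_{c,d}^{\kappa,t}(i)]_{i=1}^{\kappa}$ solves the system; since that argument requires $\kappa>m=3$, the cases $\kappa=2,3$ are dispatched by direct verification. You instead bypass the linear algebra entirely: using Lemma~\ref{lem:m-iso-characterization}(iv) you force $\hn{n}(x_1)$ to be a quadratic polynomial in $n$, evaluate \eqref{rndx0} explicitly in the $(N,k)$-decomposition $n+1=N\kappa+k$, and match coefficients to read off both the leading coefficients $A=d/(2\kappa)$, $B=c/\kappa-d/2$ and the formula $\mu(x_k)=A(k-1)^2+B(k-1)+D$, which indeed expands to $w_{c,d}^{\kappa,t}(k)$ with $t=\mu(x_\kappa)>0$ (your coefficient bookkeeping checks out, including the consistency of the $k$-dependent terms and the identity $w_{c,d}^{\kappa,t}(\kappa)=t$). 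What your approach buys is a uniform treatment of all $\kappa\ge 2$ with no case split and a derivation of the polynomial rather than a verification of it; what the paper's approach buys is brevity, since the circulant rank argument was already needed for Proposition~\ref{pro:completion-composition}. Your handling of the side conditions (positivity giving $c_i>0$, $d_i\ge0$; boundedness of $\hn{}$ and $\hn{2}$ at $x_\kappa$ giving $c<\infty$ and $c+d<\infty$; the reverse direction via \eqref{hfnx0} and the shift-invariance of polynomial degree) is also sound.
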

	\begin{proof}
		Let us first assume that $\kappa\ge 4$.
		In view of Proposition~\ref{pro:completion-composition} and its proof it is enough to show that a vector $[\mu(x_i)]_{i=1}^\kappa$ given by \eqref{defmu2} is a solution of \eqref{eq:completion-matrix-equation}.
		This can be proved by using the fact that $\Delta^3 w_{c,d}^{\kappa,t}=0$.
		It is a matter of direct verification that our result is true for $\kappa=2,3$.
	\end{proof}
	
	Using the above tools we are ready to prove some results regarding the $m$--isometric completion problem for composition operators on directed graphs with one circuit.
	One can think of many ways to state the $m$--isometric completion problem in the case of these composition operators, thus we limit our considerations to only selected number of possibilities.
	
	First we begin with the simplest situation, namely, when the circuit has only one element.
	In this situation we can prove a result, similar in its nature, to analogical completion problem for unilateral weighted shifts.
	
	\begin{pro}
		Let $m\in\nbb$ and assume that $\{a_n\}_{n=1}^{m}\subseteq (0,\infty)$.
		Then there exist a discrete measure space $(X,\ascr,\mu)$ and a self-map $\phi\colon X\to X$ satisfying \eqref{def:composition-operator} with $\kappa=1$ and $\eta=1$ such that $\mu(x_{1,n})=a_n$ for $n\in J_{[1,m]}$ and
		$\cf$ is an $(m+2)$--isometry.
		Moreover, if $m=1$, then $(X,\ascr,\mu)$ and $\phi\colon X\to X$ can be chosen so that $\cf$ is a $2$--isometry.
	\end{pro}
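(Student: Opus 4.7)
The plan is to reduce both assertions to Corollary~\ref{cor:m-iso-characterization-one-element}: since $\kappa=\eta=1$, to obtain an $(m+2)$--isometry it suffices to produce a measure $\mu$ for which $\mu(x_{1,j+1})$ is, as a function of $j\in\zbb_+$, a polynomial of degree at most $m$ (matching the prescribed initial values), and for which the resulting $C_\phi$ is bounded. Similarly, a $2$--isometry in the case $m=1$ requires only that $\mu(x_{1,j+1})$ be constant in $j$.

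For the main statement I would set $X=\{x_1\}\cup\{x_{1,j}:j\in\nbb\}$ with $\phi$ defined by \eqref{def:composition-operator}, fix $\mu(x_1)=1$, and apply Lemma~\ref{lem:completion-interpolation} with $l=m-1$ and $b_n=a_n$. This yields some $c>0$ and, for any chosen $t\ge c$, a polynomial $p:=w_t\in\rbb_m[x]$ of degree $m$ such that $p(n)=a_n$ for $n\in\{0,\ldots,m-1\}$, $p(m)=t>0$, and $p(n)>0$ for $n\ge m+1$; thus $p(n)>0$ on all of $\zbb_+$. I then define $\mu(x_{1,j})=p(j-1)$ for $j\in\nbb$, so that in particular $\mu(x_{1,n+1})=a_n$ for $n\in\{0,\ldots,m-1\}$.

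Boundedness of $C_\phi$ follows from \eqref{eq:boundedness-condition} and \eqref{rndx151}: one has $\hn{}(x_1)=1+p(0)$ and $\hn{}(x_{1,j})=p(j)/p(j-1)$, and since $p$ is a polynomial with positive values, $p(j)/p(j-1)\to 1$ as $j\to\infty$, so $\hn{}\in L^\infty(\mu)$. Because $\mu(x_{1,j+1})=p(j)$ is a polynomial in $j$ of degree $m=(m+2)-2$, Corollary~\ref{cor:m-iso-characterization-one-element} gives that $C_\phi$ is $(m+2)$--isometric.

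For the moreover clause with $m=1$, I take the same $X$ and $\phi$, set $\mu(x_1)=1$, and $\mu(x_{1,j})=a_0$ for all $j\in\nbb$. Then $\mu(x_{1,j+1})=a_0$ is constant in $j$, hence a polynomial of degree $0=2-2$; the Radon--Nikodym derivative $\hn{}$ is visibly bounded, and Corollary~\ref{cor:m-iso-characterization-one-element} yields a $2$--isometry. The only real content of the proof is Lemma~\ref{lem:completion-interpolation}, which supplies a positive extension on $\zbb_+$ of the $m$ prescribed values by a polynomial of degree $m$; everything else is a direct verification from the characterization and the boundedness criterion.
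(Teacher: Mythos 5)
Your proposal is correct and follows essentially the same route as the paper: the paper's proof is precisely "apply Lemma~\ref{lem:completion-interpolation} to $\{a_n\}_{n=0}^{m-1}$ and then Corollary~\ref{cor:m-iso-characterization-one-element}," with the $m=1$ moreover clause read off directly from the corollary. You have simply written out the details (choice of $l=m-1$, positivity of the interpolating polynomial on $\zbb_+$, and the boundedness check via \eqref{eq:boundedness-condition} and \eqref{rndx151}) that the paper leaves implicit.
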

	\begin{proof}
		If $m\ge 2$, then we use Lemma~\ref{lem:completion-interpolation} for $\{a_n\}_{n=1}^{m}$ and Corollary~\ref{cor:m-iso-characterization-one-element}.
		If $m=1$, then the result follows directly from Corollary~\ref{cor:m-iso-characterization-one-element}.
	\end{proof}
	
	Let us now prove another result regarding $2$--isometric completion problem in which we assume that measure of elements located in the circuit is a priori given.
	
	\begin{pro}
		\label{pro:completion-circuit}
		Suppose that $\kappa>1$, $\eta\in\nbb\cup\{\infty\}$ and let  $a:=\{a_n\}_{n=1}^\kappa\subseteq(0,\infty)$.
		Then the following are equivalent:
		\begin{enumerate}
			\item there exist a discrete measure space $(X,\ascr,\mu)$ and a self-map $\phi\colon X\to X$ satisfying \eqref{def:composition-operator} such that $\mu(x_{n})=a_n$ for $n\in J_{[1,\kappa]}$ and $\cf\in\B[L^2(\mu)]$ is a $2$--isometry,
			\item $\{(\triangle a)_n\}_{n=1}^{\kappa-1}$ is a constant sequence such that $(\triangle a)_1>0$.
		\end{enumerate}
	\end{pro}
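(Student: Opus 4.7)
The plan is to deduce this directly from the 2--isometry part of Theorem~\ref{th:characterization_3_isom}. Recall that for a 2--isometry that theorem forces $d_i=0$ for all $i\in J_\eta$, hence $d=0$, and the polynomial $w_{c,d}^{\kappa,t}$ of \eqref{eq:3-iso-polynomial} collapses to the linear function
\begin{align*}
w_{c,0}^{\kappa,t}(x) = \frac{c}{\kappa}\,x + (t-c), \qquad x\in\rbb.
\end{align*}
So any 2--isometry produced by the theorem has $\mu(x_i)=\frac{c}{\kappa}\,i+(t-c)$ for $i\in J_\kappa$, and conversely any such affine choice with $c>0$ and suitable $t>0$ can be realized.

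For the implication (i)$\Rightarrow$(ii), suppose $\cf$ is a 2--isometry satisfying the data $\mu(x_n)=a_n$. Applying the ``moreover'' part of Theorem~\ref{th:characterization_3_isom}, we get systems $\{c_i\}_{i\in J_\eta}\subseteq(0,\infty)$ with $c:=\sum_{i=1}^\eta c_i<\infty$ and some $t\in(0,\infty)$ such that $a_n=\frac{c}{\kappa}n+(t-c)$ for $n\in J_\kappa$. Then $(\Delta a)_n=c/\kappa$ for every $n\in\{1,\ldots,\kappa-1\}$, which is constant, and strictly positive since each $c_i>0$ (and $\eta\ge 1$).

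For the implication (ii)$\Rightarrow$(i), set $\delta:=(\Delta a)_1>0$, so that $a_n=a_1+(n-1)\delta$ for all $n\in J_\kappa$. Choose $\eta=1$ (the simplest choice) and define $c_1:=\kappa\delta$, $d_1:=0$, $t:=a_1+(\kappa-1)\delta=a_\kappa>0$; let $w_1$ be the constant polynomial equal to $c_1$. Define $\mu$ on $X$ by $\mu(x_{1,j})=c_1$ for $j\in\nbb$ and $\mu(x_n)=a_n$ for $n\in J_\kappa$. A direct check shows $a_n=\frac{c}{\kappa}n+(t-c)=w_{c,0}^{\kappa,t}(n)$, and the condition \eqref{eq:completion-boundedness} holds trivially for any $M\ge\max\{c_1,1\}$ since $w_1(j+1)/w_1(j)=1$. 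Theorem~\ref{th:characterization_3_isom} then supplies the required 2--isometric composition operator $\cf$.

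There is no serious obstacle here; the only minor point to be careful about is the positivity of the parameter $t$ (guaranteed by $t=a_\kappa>0$) and the verification that the affine formula $\frac{c}{\kappa}n+(t-c)$ really coincides with $a_n$ after substituting $c=\kappa\delta$ and $t=a_1+(\kappa-1)\delta$, which is straightforward. The proof is essentially a translation of the data $\{a_n\}$ into the parameters $(c,t)$ of the characterization.
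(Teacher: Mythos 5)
Your argument follows essentially the same route as the paper: both directions are read off from the $2$--isometry part of Theorem~\ref{th:characterization_3_isom}, with the translation $c=\kappa(\Delta a)_1$ and $t=a_1+c(1-\tfrac1\kappa)=a_\kappa$; your computations (including the collapse of \eqref{eq:3-iso-polynomial} to $\frac{c}{\kappa}x+(t-c)$ when $d=0$ and the verification that this affine function interpolates the $a_n$) are correct, and the implication (i)$\Rightarrow$(ii) is exactly the paper's.

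There is one point where your (ii)$\Rightarrow$(i) falls short of the statement: $\eta$ is part of the hypothesis of the proposition, so \eqref{def:composition-operator} must be realized with that given $\eta$, whereas you set $\eta=1$. The paper handles arbitrary $\eta$ by distributing the total mass $c=\kappa(\Delta a)_1$ over the branches, putting $\mu(x_{i,j})=r_i(\Delta a)_1$ with $r_i=\kappa/\eta$ for finite $\eta$ and $r_i=2^{-i}\kappa$ for $\eta=\infty$, so that $\sum_{i\in J_\eta}\mu(x_{i,1})=c$ and the branch measures are constant in $j$; one must also note that \eqref{eq:completion-boundedness} still holds (the ratios are all $1$ and $\sum_i w_i(1)=c<\infty$). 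This is an easy repair, but as written your construction only proves the implication for $\eta=1$.
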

	\begin{proof}
		(i)$\Rightarrow$(ii).
		This implication is a straightforward application of Theorem~\ref{th:characterization_3_isom}.
		Indeed, if $\cf$ is a $2$--isometry, then equation \eqref{eq:3-iso-polynomial} implies that $\mu(x_i)=\frac{c}{\kappa}i-c +t$ for $i\in J_{[1,\kappa]}$, where $c=\sum_{i=1}^\eta \mu(x_{i,1})$.
		Thus $\{(\triangle a)_n\}_{n=1}^{\kappa-1}$ is a constant sequence and $(\triangle a)_1=\mu(x_2) - \mu(x_1) =\frac{c}{\kappa}>0$. 
		
		(ii)$\Rightarrow$(i).
		Define $X$ and $\ascr$ as in \eqref{def:composition-operator}.
		Let $\mu$ be a discrete measure on $\ascr$ such that
		\begin{align}
		\label{eq:completion-measure}
		\mu(x) = \begin{cases}
		a_n & \text{if } x=x_n \text{ for some } n\in J_{[1,\kappa]}, \\
		r_i(\triangle a)_1 & \text{if }
		x=x_{i,j} \text{ for some } i\in J_{[1,\eta]}
		\text{ and } j\in\nbb,
		\end{cases}
		\end{align}
		where
		\begin{align}
		\label{eq:completion-measure-2}
		r_i:= \begin{cases}
		2^{-i} \kappa & \text{if } \eta=\infty, \\
		\frac{\kappa}{\eta} & \text{if } \eta <\infty.
		\end{cases}
		\end{align}
		Now, set $c=\kappa(\triangle a)_1$ and note that \eqref{eq:completion-measure} combined with \eqref{eq:completion-measure-2} imply that $c=\sum_{i=1}^\eta\mu(x_{i,1})$.
		Define $t=a_1+c(1-\frac{1}{\kappa})$ and $w_{c,d}^{\kappa,t}$ as in \eqref{eq:3-iso-polynomial} with $d=0$.
		Then, since $\{(\triangle a)_n\}_{n=1}^{\kappa-1}$ is a constant sequence, 
		\begin{align*}
		\mu(x_{i}) & \overset{\eqref{eq:completion-measure}}= a_i = a_1+(\triangle a)_1(i-1) = a_1 + \frac{c}{\kappa} i - \frac{c}{\kappa} = t - c(1-\frac{1}{\kappa}) + \frac{c}{\kappa} i - \frac{c}{\kappa} \\
		&= \frac{c}{\kappa}i -c + t =
		w_{c,d}^{\kappa,t}(i), \quad i\in J_{[1,\kappa]}.
		\end{align*}
		Theorem~\ref{th:characterization_3_isom} implies that $\cf$ is a $2$--isometry.
	\end{proof}
	
	Let us note that, if $\eta > 1$ in Proposition~\ref{pro:completion-circuit}, then, if exists, the $2$--isometric operator that solves the completion problem is not unique.
	Indeed, let $(X,\ascr,\mu)$ be as in Proposition~\ref{pro:completion-circuit}(i).
	For $t\in(0,1)$ define a measure $\nu_t\colon\ascr\to(0,\infty)$ in the following way:
	\begin{align*}
	\nu_t(x) = \begin{cases}
	\mu(x_{1,j}) + t\mu(x_{2,j}) & \text{if } x=x_{1,j} \text{ for some } j\in \nbb, \\
	(1-t)\mu(x_{2,j}) & \text{if } x=x_{2,j} \text{ for some } j\in \nbb, \\
	\mu(x) & \text{otherwise}.
	\end{cases}
	\end{align*}
	It is an easy observation that $\cf$ satisfies Proposition~\ref{pro:completion-circuit}(i) with $\nu_t$ in place of $\mu$.

	\section{Subnormality of Cauchy dual of $2$--isometry}

	We begin this section with characterization of analytic composition operators on directed graphs with one circuit.
	The equivalence (i)$\Leftrightarrow$(ii) of the below lemma is a counterpart of a known result that describes $\RngInf(\cf):=\bigcap_{n=1}^\infty \cf^n(L^2(\mu))$ for a composition operator $\cf$ (cf.\ \cite[Remark 45]{B-J-J-S-2015} and \cite[Lemma~2.3]{A-C-T-2018}).
	
	\begin{lem}
		\label{lem:analyticity}
		Suppose \eqref{def:composition-operator} holds, $\cf\in\B[L^2(\mu)]$ and $f\in L^2(\mu)$.
		Then the following are equivalent{\em :}
		\begin{enumerate}
			\item $f\in\RngInf(\cf)$,
			\item \label{eq:analyticity_1}
			$f(x_{r}) = f(x_{i, l\kappa+r})$ for all
			$r\in J_{[1,\kappa]}$, $i\in J_{[1,\eta]}$ and
			$l\in\zbb_+$,
			\item $\cf^\kappa f=f$.
		\end{enumerate}
	\end{lem}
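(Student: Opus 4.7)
The plan is to prove the three-way equivalence by first working out an explicit formula for $\phi^\kappa$, which makes (ii)$\Leftrightarrow$(iii) almost mechanical, and then closing the loop with (iii)$\Rightarrow$(i) (easy, via iteration) and (i)$\Rightarrow$(ii) (the only direction with real content, via the pointwise identity $g\circ\phi^n=f$).

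First I would compute $\phi^\kappa$ on $X$ by walking along the graph according to \eqref{def:composition-operator}. The circuit has length $\kappa$, so $\phi^\kappa(x_r)=x_r$ for every $r\in J_\kappa$. For a branch vertex $x_{i,j}$, $j$ iterations of $\phi$ take it to $x_\kappa$, after which it continues cyclically; this gives
\[
\phi^\kappa(x_{i,j})=\begin{cases} x_j & \text{if }1\le j\le\kappa,\\ x_{i,j-\kappa} & \text{if }j>\kappa.\end{cases}
\]
With this in hand, (iii) $\cf^\kappa f=f$ translates pointwise into $f(x_{i,j})=f(x_j)$ for $j\le\kappa$ and $f(x_{i,j})=f(x_{i,j-\kappa})$ for $j>\kappa$; an induction on $l$ collapses the second into $f(x_{i,l\kappa+r})=f(x_{i,r})$, and the first then matches $f(x_{i,r})$ to $f(x_r)$, which is exactly (ii). The reverse implication runs the same identifications backwards.

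For (iii)$\Rightarrow$(i), I would iterate: if $\cf^\kappa f=f$ then $\cf^{n\kappa}f=f$ for every $n\in\nbb$, and for arbitrary $m\in\nbb$ I choose $n$ with $n\kappa\ge m$ and write $f=\cf^m\bigl(\cf^{n\kappa-m}f\bigr)$, where $\cf^{n\kappa-m}f\in L^2(\mu)$ by boundedness of $\cf$. Thus $f\in\cf^m(L^2(\mu))$ for every $m$, i.e.\ $f\in\RngInf(\cf)$.

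The heart of the matter is (i)$\Rightarrow$(ii). Since every singleton has positive measure, elements of $L^2(\mu)$ are honestly defined at every point, and $\cf^n g=f$ is literally $g(\phi^n(x))=f(x)$ for every $x\in X$. Fix $i\in J_\eta$, $l\in\zbb_+$, $r\in J_\kappa$ and set $j=l\kappa+r$; by (i) pick $g_j\in L^2(\mu)$ with $\cf^j g_j=f$. The key observation is that $\phi^j$ collapses $x_{i,j}$ and $x_r$ to the same point: directly, $\phi^j(x_{i,j})=x_\kappa$, while $\phi^j(x_r)=\phi^{j-r}(\phi^r(x_r))=\phi^{l\kappa}(x_\kappa)=x_\kappa$ by the Step 1 computation. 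Evaluating $g_j$ at the common value gives $f(x_{i,j})=g_j(x_\kappa)=f(x_r)$, which is (ii). The main obstacle is really only the combinatorial bookkeeping of $\phi^n$ on the graph — once the $\phi^\kappa$ formula is secured, each implication falls out from a short computation and no genuine analytical difficulty remains.
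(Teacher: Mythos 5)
Your proof is correct and follows essentially the same route as the paper: you establish the explicit action of $\phi^\kappa$ (resp.\ $\phi^{l\kappa+r}$) on circuit and branch vertices, deduce (ii)$\Leftrightarrow$(iii) pointwise, get (iii)$\Rightarrow$(i) by iterating, and prove (i)$\Rightarrow$(ii) by pulling $f$ back through a high enough power of $\cf$ so that the corresponding iterate of $\phi$ sends $x_{i,l\kappa+r}$ and $x_r$ to a common point. The only cosmetic difference is that you collapse both points to $x_\kappa$ via $\phi^{l\kappa+r}$ whereas the paper uses $\phi^{\kappa(l+1)}$, which sends the branch point to $x_r$ itself; the argument is the same.
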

	\begin{proof}
		First observe that
		\begin{align}
		\label{eq:analyticity_3}
		\phi^{r+n}(x_{i,n})&=x_{\kappa-r},\quad n\in\nbb, \; r\in J_{[0,\kappa-1]},\; i\in J_{[1,\eta]}, \\
		\label{eq:analyticity_4}
		\phi^{l\kappa}(x_r) &= x_r, \quad
		l\in\zbb_+, r\in J_{[1,\kappa]}.
		\end{align}
		
		(i)$\Rightarrow$(ii).
		Assume that $f\in\RngInf(\cf)$ and fix $l\in\zbb_+$.
		Then there exists $g\in L^2(\mu)$ such that
		\begin{align}
		\label{eq:analyticity_2}
		f=\cf^{\kappa(l+1)}g.
		\end{align}
		Thus, for $r\in J_{[1,\kappa]}$ and $i\in
		J_{[1,\eta]}$ we get that
		\begin{align*}
		f(x_{i, l\kappa+r}) &\overset{\eqref{eq:analyticity_2}}=(\cf^{\kappa(l+1)}g)(x_{i,l\kappa+r}) 
		=g(\phi^{\kappa}(x_{i,r})) \\
		&\overset{\eqref{eq:analyticity_3}} = g(x_r) \overset{\eqref{eq:analyticity_4}} = g(\phi^{\kappa(l+1)}(x_r)) 
		=(\cf^{\kappa(l+1)}g)(x_r)\overset{\eqref{eq:analyticity_2}}=f(x_r),
		\end{align*}
		which yields \eqref{eq:analyticity_1}.
		
		(ii)$\Rightarrow$(iii).
		This implication is a consequence of \eqref{eq:analyticity_4} with $l=0$ and the following equality
		\begin{align*}
		\phi^\kappa (x_{i, l\kappa+r})
		= \begin{cases}
		x_{i,\kappa (l-1) + r} & \text{if } l\in\nbb \text{ and } r\in J_{[1,\kappa]}, \\
		x_r & \text{if } l=0 \text{ and } r\in J_{[1,\kappa-1]},
		\end{cases}
		\end{align*}
		which holds for all $l\in \zbb_+$ and $r\in J_{[1,\kappa]}$.
		
		(iii)$\Rightarrow$(i). It is obvious.
	\end{proof}
	
	Note that in the case of finite measure spaces an operator  $\cf\in\B[L^2(\mu)]$ with $(X,\ascr,\mu)$ and $\phi$ given by $\eqref{def:composition-operator}$ is not analytic.
	Indeed, setting $f\equiv 1$ we deduce from Lemma~\ref{lem:analyticity} that $f\in L^2(\mu)\cap\RngInf(\cf)$, which contradicts analyticity of $\cf$.
	Using the above idea we can characterize analyticity of bounded composition operators on directed graphs with one circuit in the following way.
	
	\begin{pro}
		\label{prop:analiticity}
		Suppose $\eqref{def:composition-operator}$ holds and $\cf\in\B[L^2(\mu)]$.
		Then the following are equivalent{\em :}
		\begin{enumerate}
			\item $\cf$ is analytic,
			\item $\sum_{i=1}^\eta \sum_{l=0}^{\infty} \mu(x_{i,l\kappa + r})$ is divergent for all $r\in J_{[1,\kappa]}$.
		\end{enumerate}
	\end{pro}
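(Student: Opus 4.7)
The plan is to reduce everything to Lemma~\ref{lem:analyticity}, which gives us an explicit description of $\RngInf(\cf)$: an $L^2$-function $f$ lies in $\RngInf(\cf)$ if and only if it is constant along each ``periodic fiber'' $\{x_r\} \cup \{x_{i,l\kappa+r}: i\in J_\eta,\, l\in\zbb_+\}$. Recall that $\cf$ is analytic precisely when $\RngInf(\cf)=\{0\}$, so the question becomes: for which measures is the only such ``fiberwise constant'' function the zero function in $L^2(\mu)$?

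The key step is to compute the $L^2(\mu)$-norm of a function $f$ satisfying Lemma~\ref{lem:analyticity}(ii). Since $X$ decomposes into the $\kappa$ disjoint fibers indexed by $r\in J_\kappa$, and since $f$ takes the constant value $f(x_r)$ on the $r$-th fiber, a direct calculation gives
\begin{align*}
\|f\|^2 = \sum_{r=1}^{\kappa} |f(x_r)|^2 \Big(\mu(x_r) + \sum_{i\in J_\eta}\sum_{l=0}^{\infty}\mu(x_{i,l\kappa+r})\Big).
\end{align*}

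For the implication (ii)$\Rightarrow$(i), assume every inner sum diverges and take $f\in\RngInf(\cf)$. The displayed identity forces $f(x_r)=0$ for every $r\in J_\kappa$, and Lemma~\ref{lem:analyticity}(ii) then gives $f\equiv 0$, so $\RngInf(\cf)=\{0\}$. For the converse (i)$\Rightarrow$(ii), I would argue by contraposition: suppose there exists $r_0\in J_\kappa$ with $\sum_{i\in J_\eta}\sum_{l=0}^\infty \mu(x_{i,l\kappa+r_0})<\infty$. Define $f$ to be the characteristic function of the $r_0$-th fiber, that is, $f(x_{r_0})=1$, $f(x_{i,l\kappa+r_0})=1$ for all $i\in J_\eta$, $l\in\zbb_+$, and $f=0$ elsewhere. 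This $f$ manifestly satisfies the fiberwise-constant condition of Lemma~\ref{lem:analyticity}(ii), hence $f\in\RngInf(\cf)$; moreover the norm formula shows $\|f\|^2 = \mu(x_{r_0}) + \sum_{i,l}\mu(x_{i,l\kappa+r_0})<\infty$, so $f\in L^2(\mu)\setminus\{0\}$, contradicting analyticity.

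There is essentially no serious obstacle here: Lemma~\ref{lem:analyticity} does all of the structural work, and the only thing one really has to verify is the decomposition of $\|f\|^2$ along fibers, which is straightforward because the fibers partition $X$. The mild care-point is bookkeeping when $f(x_{r_0})$ is nonzero but $f$ vanishes on the other fibers, to confirm that Lemma~\ref{lem:analyticity}(ii) is satisfied on every fiber (trivially where $f\equiv 0$, and by construction on the $r_0$-th fiber).
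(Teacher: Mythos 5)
Your proposal is correct and follows essentially the same route as the paper: both directions rest on Lemma~\ref{lem:analyticity}, the contrapositive for (i)$\Rightarrow$(ii) uses the same fiber characteristic function, and your norm identity is precisely the paper's decomposition $f=\sum_{r=1}^\kappa f(x_r)f_r$ into disjointly supported pieces.
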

	\begin{proof}
		(i)$\Rightarrow$(ii). Suppose, to the
		contrary, that $\sum_{i=1}^\eta
		\sum_{l=0}^{\infty}
		\mu(x_{i,l\kappa+r})<\infty$ for some $r\in J_{[1,\kappa]}$. Set
		\begin{align}
		\label{deffr}
		f_r(x)=
		\begin{cases}
		1 & \text{if } x\in\{x_r\}\cup\{x_{i,l\kappa+r}\colon i\in J_{[1,\eta]},\;l\in \zbb_+\}, \\
		0 & \text{otherwise,}
		\end{cases} \quad x\in X.
		\end{align}
		Then, in view of Lemma~\ref{lem:analyticity}, function $f\in L^2(\mu)\cap\RngInf(\cf)$, which contradicts analyticity of $\cf$.
		
		(ii)$\Rightarrow$(i). Assume that (ii)
		holds and $f\in \RngInf(\cf)$. Then, using
		Lemma~\ref{lem:analyticity}(ii) we can represent $f$ as 
		$f=\sum_{r=1}^\kappa f(x_r)f_r$, where
		$f_r$ is given by \eqref{deffr} for every $r\in J_{[1,\kappa]}$. This, the
		facts that the supports of functions $f_r$,
		$r\in J_{[1,\kappa]}$ are disjoint and $f\in L^2(\mu)$
		together with (ii)
		imply that $f\equiv0$, which completes the
		proof.
	\end{proof}
	
	The following important result is a direct consequence of Theorem~\ref{th:m-iso-branches} and Proposition~\ref{prop:analiticity}.
	
	\begin{cor}
		\label{cor:analytic}
		If $\eqref{def:composition-operator}$ holds and $\cf\in\B[L^2(\mu)]$ is an $m$--isometry for $m\ge 2$, then $\cf$ is analytic.
	\end{cor}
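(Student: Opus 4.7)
The plan is to chain together Theorem~\ref{th:m-iso-branches} and Proposition~\ref{prop:analiticity}, with only a small analytic observation bridging them. By Theorem~\ref{th:m-iso-branches}, for each $i\in J_\eta$ there exists a polynomial $p_i\in\rbb_{m-2}[x]$ such that $p_i(j)=\mu(x_{i,j+1})$ for all $j\in\zbb_+$. Since $\mu$ is a positive measure on a discrete space, $p_i(j)>0$ for every $j\in\zbb_+$.

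Next I would verify that the sum $\sum_{l=0}^\infty \mu(x_{i,l\kappa+r})$ diverges for any fixed $r\in J_\kappa$ and $i\in J_\eta$. This reduces to showing that $\sum_{l=0}^\infty p_i(l\kappa+r-1)=\infty$. A polynomial that is strictly positive at every nonnegative integer is either a positive constant (in which case the series obviously diverges) or a nonconstant polynomial whose leading coefficient must be positive (otherwise $p_i$ would become negative for large arguments), so $p_i(l\kappa+r-1)\to+\infty$ and the series diverges. In both cases the partial sums blow up.

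Summing over $i\in J_\eta$ (a nonempty index set by the standing assumption) then yields
\begin{align*}
\sum_{i\in J_\eta}\sum_{l=0}^\infty \mu(x_{i,l\kappa+r})=+\infty,\quad r\in J_\kappa.
\end{align*}
Proposition~\ref{prop:analiticity} immediately gives analyticity of $\cf$. There is no real obstacle here; the only step requiring any care is the polynomial-positivity observation, which is elementary because strict positivity on $\nbb$ forces a nonconstant real polynomial to tend to $+\infty$ along $\nbb$.
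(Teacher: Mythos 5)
Your proof is correct and follows exactly the route the paper intends: the paper states Corollary~\ref{cor:analytic} as a direct consequence of Theorem~\ref{th:m-iso-branches} and Proposition~\ref{prop:analiticity}, and you have simply supplied the (elementary) missing detail that a polynomial strictly positive on $\zbb_+$ yields a divergent series along any arithmetic progression. Nothing further is needed.
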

	
	In what follows we investigate whether the Cauchy dual operator of a composition operator on a directed graph with on circuit is subnormal.
	The notion of the Cauchy dual operator was introduced by Shimorin in \cite{shimorin-2001} on the occasion of study of the Wold-type decomposition and the wandering subspace property.
	Recently, these topics have been very popular and authors established interesting results (see e.g. \cite{A-C-T-2018,A-C-J-S-2017,B-S-2018}).
	In particular, there is still an open problem of determining a characterization of subnormality of the Cauchy dual operator of a $2$--isometry.
	Anand et al. provided two sufficient conditions under which $2$--isometric operator satisfies the above property, namely, the kernel condition (see \cite[Theroem 3.3]{A-C-J-S-2017}) and $\Delta_T$--regularity (see \cite[Theorem~4.5]{A-C-J-S-2017}).
	In what follows, we also characterize when composition operators on directed graphs with one circuit satisfy these properties.

	Let us recall that for a left-invertible
	operator $T\in\B$ the Cauchy dual operator $T'$ of $T$ is given by $T'=T(T^*T)^{-1}$.
	It is well-known (see~\cite{singh-1974})  that, if $\cf\in\B[L^2(\mu)]$ is a composition operator, then 
	\begin{align}
	\label{cauchy-composition}
	\cf^*\cf f = \hn{} f, \quad f\in L^2(\mu),
	\end{align}
	 and consequently, if moreover $\cf$ is left-invertible, then $\cf'$ is a weighted composition operator with symbol $\phi$ and weight $w_\phi:=\frac{1}{\hn{}\circ \phi}$, i.e.,
	\begin{align}
	\label{eq:cauchy-dual-composition}
	\cf' f = \frac{1}{\hn{} \circ \phi} \cdot \cf f, \quad f\in L^2(\mu).
	\end{align}
	
	In what follows we use the following notation.
	If $(X,\ascr,\mu)$ is a discrete measure space and $w\colon X\to(0,\infty)$ is a function, then by $\mu_w$ and $\{\hat{w}_n\}_{n=0}^\infty$ we denote a discrete measure on $\ascr$ and a sequence of functions uniquely determined by 
	\begin{align}
	\label{muw}
	\mu_w(x)=|w(x)|^2\mu(x),\quad x\in X
	\end{align}
	and 
	\begin{align}
	\label{sdvjoisdvo}
	\hat{w}_0\equiv 1, \quad \hat{w}_{n+1}=\prod_{j=0}^n w\circ\phi^j, \quad  n\in\zbb_+.
	\end{align}
	By $\h{\phi,w}$ we denote the Radon-Nikodym derivative $\frac{d\mu_w\circ\phi^{-1}}{d\mu}$.
	
	We gather below some necessary notation and properties of the Cauchy dual operator $\cf'$ of $\cf$, which are used later.
	
	\begin{lem}
		\label{lem:cauchy-dual-2-iso}
		If $\eqref{def:composition-operator}$ holds with $\kappa=1$, $\cf\in\B[L^2(\mu)]$ is a $2$--isometry and $w:=w_\phi$, then
		\begin{enumerate}
			\item $\hat{w}_n(x)=\begin{cases}
			\alpha^{n} & \text{if } x\in \{x_1\}\cup\{x_{i,1}\colon i\in J_{[1,\eta]}\},\\
			\alpha^{(n+1-j)} & \text{if } x\in\{x_{i,j}\colon i\in J_{[1,\eta]},\; j\in J_{[2,n]}\},\\
			1 & \text{otherwise},
			\end{cases} \; x\in X,\; n\in\zbb_+$
			\item $\h{\phi^n,\hat{w}_n}(x)=\begin{cases}
			\frac{\alpha^{2n}\mu(x_1)+c\sum_{j=1}^n   \alpha^{2(n+1-j)}}{\mu(x_{1})} & \text{if } x=x_1, \\
			1 & \text{otherwise},
			\end{cases} \;\; x\in X,\; n\in\zbb_+$,
		\end{enumerate}
		where $\alpha=\frac{\mu(x_1)}{\mu(x_1)+c}$
		and $c=\sum_{i=1}^\eta\mu(x_{i,1})$.
	\end{lem}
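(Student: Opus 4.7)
The plan is to first use the $2$-isometry hypothesis to pin down $\hn{}$, which then determines $w = w_\phi$ pointwise; assertion (i) becomes a direct orbit computation, and (ii) reduces to a fiber sum that collapses using (i).

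First, since $\kappa = 1$ and $\cf$ is $2$-isometric, Corollary~\ref{cor:m-iso-characterization-one-element} yields that $\mu(x_{i,j+1})$ is a polynomial in $j$ of degree at most $0$; equivalently, $\mu(x_{i,j}) = \mu(x_{i,1})$ for all $i\in J_\eta$ and $j\in\nbb$. Substituting into \eqref{rndx151} gives $\hn{}(x_1) = (\mu(x_1)+c)/\mu(x_1) = 1/\alpha$ and $\hn{}(x_{i,j}) = 1$. Then $w(x) = 1/\hn{}(\phi(x))$ evaluates to $\alpha$ at $x_1$ and at each $x_{i,1}$ (as $\phi$ maps both to $x_1$, where $\hn{} = 1/\alpha$), and to $1$ at $x_{i,j}$ for $j\ge 2$ (as $\phi(x_{i,j}) = x_{i,j-1}$, where $\hn{} = 1$).

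For (i), I would expand $\hat{w}_n(x) = \prod_{k=0}^{n-1} w(\phi^k(x))$ by tracing the orbit. The orbit of $x_1$ stays at $x_1$, so every factor equals $\alpha$ and the product is $\alpha^n$. The orbit of $x_{i,1}$ sits at $x_{i,1}$ at step $0$ and at $x_1$ thereafter, again producing $\alpha^n$. For $x = x_{i,j}$ with $2 \le j \le n$, the orbit traverses $x_{i,j}, x_{i,j-1}, \ldots, x_{i,2}$ on the first $j-1$ steps (contributing $1$ each), then $x_{i,1}$ (contributing $\alpha$), then $x_1$ on each of the remaining $n-j$ steps (contributing $\alpha$), for a total of $\alpha^{n+1-j}$. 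If $j > n$, the orbit never leaves the depth-$\ge 2$ part of the branch during the first $n$ steps, so all factors are $1$.

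For (ii), I would use the discrete-space formula $\h{\phi^n,\hat{w}_n}(x) = \mu(x)^{-1}\sum_{y\in\phi^{-n}(x)} |\hat{w}_n(y)|^2\mu(y)$ together with the explicit fibers $\phi^{-n}(x_1) = \{x_1\} \cup \{x_{i,j} : i\in J_\eta,\ 1\le j\le n\}$ and $\phi^{-n}(x_{i,j}) = \{x_{i,j+n}\}$. At $x = x_1$, inserting the values of $\hat{w}_n$ from (i) and factoring $c = \sum_i \mu(x_{i,1})$ out of the double sum recovers the stated formula. At $x = x_{i,j}$ the single preimage $x_{i,j+n}$ satisfies $j+n > n$, so $\hat{w}_n(x_{i,j+n}) = 1$, and since $\mu$ is constant along each branch the quotient is $1$. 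No step is substantively difficult; the only real care needed is bookkeeping the depth of $\phi^k(x)$ along the branch, which controls when the factor $\alpha$ versus $1$ is picked up.
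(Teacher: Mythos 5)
Your proposal is correct and follows essentially the same route as the paper: both derive $\mu(x_{i,j})=c_i$ from Corollary~\ref{cor:m-iso-characterization-one-element}, compute $w_\phi$ pointwise and obtain (i) by tracking the orbit (the paper phrases this as an induction on $n$ via the values of $w_\phi\circ\phi^k$, which is the same bookkeeping), and prove (ii) from the fiber-sum formula $\h{\phi^n,\hat{w}_n}(x)=\mu_{\hat{w}_n}((\phi^n)^{-1}(\{x\}))/\mu(x)$ applied to the explicit preimages of $x_1$ and $x_{i,j}$. No gaps.
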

	\begin{proof}
		It follows from Corollary~\ref{cor:m-iso-characterization-one-element} that there exists a system $\{c_i\}_{i=1}^\eta\subseteq(0,\infty)$ such that
		\begin{align}
		\label{eq:mux} \mu(x_{i,j})=c_i \quad
		i\in J_{[1,\eta]}, \; j\in\nbb.
		\end{align}
		
		(i) A direct computation shows that (i) holds for $n=0,1$.
		Using \eqref{eq:mux}, for $k\in\nbb$ and $x\in X$ we get
		\begin{align*}
		{w}_\phi\circ \phi^k(x)=\begin{cases}
		\alpha & \hbox{if } x\in \{x_1\}\cup \{x_{i,j}\colon i\in J_{[1,\eta]},\; j\in J_{[2,k+1]}\},\\
		1 & \hbox{otherwise}. \\
		\end{cases}
		\end{align*}
		This and induction argument show that (i) holds for $n\ge 2$.
		
		(ii) Let us recall that, by \cite[eq.\ (6.5)]{B-J-J-S-2018},
		\begin{align*}
		\h{\phi^n,\hat{w}_n}(x)=\frac{\mu_{\hat{w}_n}((\phi^n)^{-1}(\{x\}))}{\mu(x)}, \quad x\in X.
		\end{align*}
		This implies that
		\begin{align*}
		\h{\phi^n,\hat{w}_n}(x_{i,j}) &=\frac{\mu_{\hat{w}_n}(x_{i,j+n})}{\mu(x_{i,j})}\\
		&\overset{\eqref{muw}}{=}
		\frac{\hat{w}_n(x_{i,j+n})^2\mu(x_{i,j+n})}{\mu(x_{i,j})}
		\overset{\mathrm{(i)\&\eqref{eq:mux}}}{=}1, \quad
		i\in J_{[1,\eta]},\; j\in \nbb.
		\end{align*}
		Similarly,
		\begin{align*}
		\h{\phi^n,\hat{w}_n}(x_{1})
		&=\frac{\mu_{\hat{w}_n}(\{x_1\}\cup\{x_{i,j} \colon i\in J_{[1,\eta]}, j=1,\ldots,n\})}{\mu(x_{1})} \\
		&\overset{\eqref{muw}}{=} \frac{\hat{w}_n(x_1)^2\mu(x_1)+\sum_{i\in J_{[1,\eta]}}\sum_{j=1}^n \hat{w}_n(x_{i,j})^2\mu(x_{i,j})}{\mu(x_{1})} \\
		&\overset{\mathrm{\eqref{eq:mux}}}{=} \frac{\alpha^{2n}\mu(x_1)+\Big(\sum_{i\in J_{[1,\eta]}}c_i\Big)\sum_{j=1}^n \alpha^{2(n+1-j)}}{\mu(x_{1})} \\
		&\overset{\mathrm{(i)}}{=} \frac{\alpha^{2n}\mu(x_1)+c\sum_{j=1}^n \alpha^{2(n+1-j)}}{\mu(x_{1})}
		\end{align*}
		which completes the proof.
	\end{proof}

	Now we provide a characterization of subnormality of the Cauchy dual operator of a composition operator on a directed graph with one circuit.
	
	\begin{thm}
		\label{th:subnormal-characterization}
		If $\eqref{def:composition-operator}$ holds  and $\cf\in\B[L^2(\mu)]$, then the following conditions are equivalent{\em :}
		\begin{enumerate}
			\item $\cf'$ is subnormal,
			\item $\{\h{\phi^n,\hat{w}_n}(x)\}_{n=0}^\infty$ is a Stieltjes moment sequence for all $x\in X$, where $w=w_\phi$ and $\{\hat{w}_n\}_{n=0}^\infty$ is given by \eqref{sdvjoisdvo}.
		\end{enumerate}
	\end{thm}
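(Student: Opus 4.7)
The plan is to identify $\cf'$ explicitly as a weighted composition operator and then invoke a Lambert-type moment characterization of subnormality for such operators. The preceding discussion already shows that $\cf' f = w_\phi \cdot (f\circ\phi)$ with $w_\phi = 1/(\hn{}\circ\phi)$. A routine induction on $n$ yields $(\cf')^n f = \hat{w}_n\cdot(f\circ\phi^n)$, so that by the change-of-variables formula applied to the absolutely continuous measure $\mu_{\hat{w}_n}\circ\phi^{-n}$,
\begin{align*}
\|(\cf')^n f\|^2 = \int_X |f|^2\, \h{\phi^n,\hat{w}_n}\,\D\mu, \quad f\in L^2(\mu),\; n\in\zbb_+.
\end{align*}

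Next I would specialize this identity to the characteristic functions $\chi_{\{x\}}$, $x\in X$, which are well-defined nonzero elements of $L^2(\mu)$ since $\mu(x)>0$ and which form an orthogonal basis of $L^2(\mu)$ under the discrete-measure-space hypothesis. A one-line computation gives $\|(\cf')^n \chi_{\{x\}}\|^2 = \mu(x)\cdot \h{\phi^n,\hat{w}_n}(x)$. The classical Lambert criterion, in the form adapted to weighted composition operators, asserts that subnormality of $\cf'$ is equivalent to the sequences $\{\|(\cf')^n \chi_{\{x\}}\|^2\}_{n=0}^\infty$ being Stieltjes moment sequences for every $x\in X$. Because $\mu(x)>0$ is a fixed positive constant and the class of Stieltjes moment sequences is invariant under multiplication by positive scalars, this is precisely condition (ii).

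The principal obstacle is the sufficiency direction of the Lambert-type characterization: from the pointwise moment data $\{\h{\phi^n,\hat{w}_n}(x)\}_n$ one must assemble a normal extension of $\cf'$. The standard recipe is to take, for each atom $x$, the representing measure $\rho_x$ provided by the Stieltjes moment problem and to use these to build a direct-integral normal operator on an enlarged Hilbert space; the delicate point is verifying that the construction is compatible with the action of $\phi$ on the orthogonal decomposition $L^2(\mu)=\bigoplus_{x\in X}\C\,\chi_{\{x\}}$ and that the resulting extension has the correct second moments on all of $L^2(\mu)$, not merely on the basis vectors. This technical gluing argument is exactly what is carried out in the more general setting of \cite[Theorem~49]{B-J-J-S-2018}, whose statement may then be invoked to complete the proof.
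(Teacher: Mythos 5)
Your proposal is correct and ends up in the same place as the paper: the paper offers no proof of this statement at all, but simply \emph{recalls} it as a known characterization from \cite[Theorem~49]{B-J-J-S-2018}, which is exactly the result you invoke for the hard (sufficiency) direction. Your preliminary reductions --- the identity $(\cf')^n f=\hat{w}_n\cdot(f\circ\phi^n)$, the formula $\|(\cf')^n\chi_{\{x\}}\|^2=\mu(x)\,\h{\phi^n,\hat w_n}(x)$, and the passage from the atoms to arbitrary $f$ via nonnegative combinations of Stieltjes moment sequences --- are sound, but they are supplementary to, rather than different from, the paper's one-line citation.
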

	\begin{proof}
	The proof follows directly from \eqref{eq:cauchy-dual-composition} and  \cite[Theorem~49]{B-J-J-S-2018}.
	\end{proof}
	
	The below theorem is the main result of this section.
	It answers affirmatively the question whether the Cauchy dual operator of $2$--isometric composition operator on a directed graph with one circuit with $\kappa=1$ is subnormal.
	
	\begin{thm}
		\label{th:subnormal}
		If $\eqref{def:composition-operator}$ holds with $\kappa=1$ and $\cf\in\B[L^2(\mu)]$ is a $2$--isometry, then $\cf'$ is subnormal.
	\end{thm}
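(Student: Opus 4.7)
The plan is to invoke Theorem~\ref{th:subnormal-characterization}, which under the present hypotheses reduces subnormality of $\cf'$ to checking that $\{\h{\phi^n,\hat{w}_n}(x)\}_{n=0}^\infty$ is a Stieltjes moment sequence for every $x\in X$. All of the data needed has already been assembled in Lemma~\ref{lem:cauchy-dual-2-iso}(ii), so what remains is a pointwise verification.

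For $x\ne x_1$, Lemma~\ref{lem:cauchy-dual-2-iso}(ii) gives $\h{\phi^n,\hat{w}_n}(x)=1$ for every $n\in\zbb_+$, which is the moment sequence of the Dirac mass $\delta_1$ and hence trivially Stieltjes. The only genuine work is at $x=x_1$. Setting $\beta:=\alpha^2$ and noting that $c=\sum_{i\in J_\eta}\mu(x_{i,1})>0$ forces $\alpha\in(0,1)$ and therefore $\beta\in(0,1)$, I would sum the geometric progression $\sum_{j=1}^n\alpha^{2(n+1-j)}=\sum_{k=1}^n\beta^k=\beta(1-\beta^n)/(1-\beta)$, substitute the result into the formula from Lemma~\ref{lem:cauchy-dual-2-iso}(ii), and exploit the factorization $1-\alpha^2=\frac{c(2\mu(x_1)+c)}{(\mu(x_1)+c)^2}$ to clear denominators. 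The expected outcome is that the expression for $\h{\phi^n,\hat{w}_n}(x_1)$ collapses to the two-term form $A\beta^n+B$, with constants $A=\frac{\mu(x_1)+c}{2\mu(x_1)+c}$ and $B=\frac{\mu(x_1)}{2\mu(x_1)+c}$ both strictly positive.

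A sequence of the form $A\beta^n+B$ with $A,B\ge 0$ and $\beta\in[0,\infty)$ is the $n$th moment sequence of the positive Borel measure $A\delta_\beta+B\delta_1$ on $[0,\infty)$, hence a Stieltjes moment sequence. Combining the two cases through Theorem~\ref{th:subnormal-characterization} then yields the desired subnormality. I expect the only delicate point to be the algebraic collapse to the two-term form; the cleanest route is probably to express everything in terms of $\alpha$ alone, apply $1-\beta=(1-\alpha)(1+\alpha)$, and watch the $\mu(x_1)+c$ factors cancel. Once that simplification is in place, the Stieltjes representation is immediate, and no input beyond Lemma~\ref{lem:cauchy-dual-2-iso} and Theorem~\ref{th:subnormal-characterization} is required.
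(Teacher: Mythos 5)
Your proposal is correct and follows essentially the same route as the paper: reduce via Theorem~\ref{th:subnormal-characterization} to the Stieltjes moment condition, read the sequences off Lemma~\ref{lem:cauchy-dual-2-iso}, and collapse the sequence at $x_1$ to $A\alpha^{2n}+B$ with $A=\frac{\mu(x_1)+c}{2\mu(x_1)+c}$, $B=\frac{\mu(x_1)}{2\mu(x_1)+c}$ and representing measure $A\delta_{\alpha^2}+B\delta_1$. Your choice of $\delta_1$ for the constant part is the right one; the paper writes $\delta_0$ there, which is a slip, since $B\delta_0$ would contribute $B\cdot 0^n$ rather than the constant $B$ for $n\ge 1$.
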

	\begin{proof}
		By Lemma~\ref{lem:cauchy-dual-2-iso} and Theorem~\ref{th:subnormal-characterization}, it remains to prove that sequence
		$\big\{\frac{\alpha^{2n}\mu(x_1)+c\sum_{j=1}^n \alpha^{2(n+1-j)}}{\mu(x_{1})}\big\}_{n=0}^\infty$
		is a Stieltjes moment sequence, where $\alpha$ and $c$ are as in Lemma~\ref{lem:cauchy-dual-2-iso}.
		Observe that
		\begin{align*}
		\frac{\alpha^{2n}\mu(x_1)+c\sum_{j=1}^n \alpha^{2(n+1-j)}}{\mu(x_{1})}
		&=\alpha^{2n}+\frac{c\alpha^2}{\mu(x_1)}\frac{\alpha^{2n}-1} {\alpha^{2}-1} \\
		&=\frac{\mu(x_1)+c}{2\mu(x_1)+c} \alpha^{2n} + \frac{\mu(x_1)}{2\mu(x_1)+c}, \quad n\in \zbb_+.
		\end{align*}
		Hence
		$\big\{\frac{\alpha^{2n}\mu(x_1)+c\sum_{j=1}^n \alpha^{2(n+1-j)}}{\mu(x_{1})}\big\}_{n=0}^\infty$
		is a Stieltjes moment sequence with the representing measure
		$\frac{\mu(x_1)+c}{2\mu(x_1)+c}\delta_{\alpha^{2}}+\frac{\mu(x_1)}{2\mu(x_1)+c}\delta_{0}$, where $\delta_x$ stands for the Borel probability measure on $\rbb$ supported on $\{x\}$ for $x\in\rbb$.
		This completes the proof.
	\end{proof}
	
	Suppose \eqref{def:composition-operator} holds.
	In what follows we denote by  $\chi_i$ the
	characteristic function of $\{x_i\}$ for $i\in
	J_{[1,\kappa]}$ and by $\chi_{i,j}$ the characteristic function of $\{x_{i,j}\}$ for $i\in J_{[1,\eta]}$ and $j\in\nbb$.
	If $A\subseteq X$, then $\chi_A$ stands for the characteristic function of $A$.

	Let us recall that a $2$--isometric operator $T\in\B$ is said to be $\Delta_T$--regular if
	$\Delta_T T = \Delta_T^\frac{1}{2} T \Delta_T^\frac{1}{2}$, where $\Delta_T = T^*T -I$ (see \cite{A-C-J-S-2017,B-S-2018}).
	By \cite[Lemma~1(a)]{richter-1988} operator $\Delta_T$ is positive, hence the above definition makes sense.
	Now we establish an equivalent condition for a $2$--isometric composition operator on a directed graph with one circuit to be $\Delta_{T}$--regular.
	
	\begin{pro}
		\label{pro:regular}
		Assume \eqref{def:composition-operator} holds and $\cf$ is $2$--isometric.
		Then $\cf$ is $\Delta_{\cf}$--regular if and only if $\kappa=1$.
	\end{pro}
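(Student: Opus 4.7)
The plan is to convert $\Delta_{\cf}$--regularity into a pointwise condition on the Radon--Nikodym derivative $\hn{}$, and then use the explicit structure of $2$--isometric composition operators provided by Theorem~\ref{th:characterization_3_isom}.

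First, recall from Singh's formula (cited in Section~4) that $\cf^*\cf$ is multiplication by $\hn{}$ on $L^2(\mu)$, so $\Delta_{\cf}=M_{\hn{}-1}$; since $\cf$ is a $2$--isometry we have $\hn{}\ge 1$, hence $\Delta_{\cf}^{1/2}=M_{\sqrt{\hn{}-1}}$. A direct computation of the two sides of $\Delta_{\cf}\cf f=\Delta_{\cf}^{1/2}\cf\Delta_{\cf}^{1/2}f$ evaluated at $x\in X$ gives
\begin{align*}
(\hn{}(x)-1)\,f(\phi(x)) = \sqrt{\hn{}(x)-1}\,\sqrt{\hn{}(\phi(x))-1}\,f(\phi(x)).
\end{align*}
Because $\mu$ is discrete and positive on singletons, testing against $f=\chi_{\phi(x)}$ shows that $\Delta_{\cf}$--regularity is equivalent to the pointwise condition: for every $x\in X$, either $\hn{}(x)=1$ or $\hn{}(x)=\hn{}(\phi(x))$.

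The direction $\kappa=1\Rightarrow\Delta_{\cf}$--regularity is then immediate. By Corollary~\ref{cor:m-iso-characterization-one-element} (applied with $m=2$), the sequence $\mu(x_{i,j})$ is constant in $j$ for every $i\in J_\eta$, so formula \eqref{rndx151} yields $\hn{}(x_{i,j})=1$ for all $i,j$. The only vertex where $\hn{}$ may exceed $1$ is $x_1$, and since $\kappa=1$ we have $\phi(x_1)=x_1$, so the condition holds trivially.

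For the converse direction, I rule out $\kappa>1$. By Theorem~\ref{th:characterization_3_isom} in its $2$--isometric variant (that is, with $d_i=0$ and circuit polynomial $w_{c,0}^{\kappa,t}(x)=\frac{c}{\kappa}x-c+t$), the measure on the circuit is an arithmetic progression with common difference $c/\kappa$, where $c=\sum_{i\in J_\eta}\mu(x_{i,1})>0$. Using \eqref{rndx151} we obtain
\begin{align*}
\hn{}(x_1) = 1+\frac{c/\kappa}{\mu(x_1)}, \qquad \hn{}(x_\kappa) = 1+\frac{c/\kappa}{\mu(x_\kappa)},
\end{align*}
both strictly greater than $1$; however, the strict monotonicity $\mu(x_1)<\mu(x_\kappa)$ forces these two values to differ. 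Since $\phi(x_1)=x_\kappa$, the pointwise condition fails at $x=x_1$, so $\cf$ is not $\Delta_{\cf}$--regular.

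The main obstacle is the first step: carefully justifying that the multiplication representations of $\Delta_{\cf}^{1/2}$ and $\cf$ combine into the stated pointwise identity, so that operator equality really reduces to comparing $\hn{}(x)$ with $\hn{}(\phi(x))$. After that, Theorem~\ref{th:characterization_3_isom} does the rest by making the circuit measure transparent enough to read off the two unequal values of $\hn{}$ when $\kappa>1$.
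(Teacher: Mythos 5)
Your proof is correct and follows essentially the same route as the paper: both reduce $\Delta_{\cf}$--regularity to a pointwise comparison of $\hn{}(x)$ with $\hn{}(\phi(x))$ (the paper does this implicitly by testing on $f=\chi_1$) and then invoke the explicit circuit measure from Theorem~\ref{th:characterization_3_isom}. Your explicit pointwise criterion, tested at $x_1$ against $\phi(x_1)=x_\kappa$, is a mild streamlining that handles all $\kappa\ge 2$ uniformly, whereas the paper separates the cases $\kappa>2$ and $\kappa=2$.
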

	\begin{proof}
		Assume $\kappa=1$.
		We want to show that $\Delta_{\cf} \cf= \Delta_{\cf}^\frac{1}{2} \cf \Delta_{\cf}^\frac{1}{2}$.
		By Corollary~\ref{cor:m-iso-characterization-one-element} there exists $\{c_i\}_{i=1}^\eta\subseteq(0,\infty)$ such that
		\begin{align*}
		\mu(x_{i,j})=c_i \quad i\in
		J_{[1,\eta]}, \; j\in\nbb.
		\end{align*}
		It follows from \eqref{cauchy-composition} that 
		\begin{align}
			\label{sdferff}
			\Delta_{\cf}f = (\h\phi - 1)f, \quad f\in L^2(\mu).
		\end{align} 
		Moreover, by \eqref{rndx151}
		\begin{align*}
		(\h\phi-1)(x) = \begin{cases}
		\frac{c}{\mu(x_1)} & \text{if } x = x_1, \\
		0 & \text{otherwise,}
		\end{cases}
		\end{align*}
		where $c=\sum_{i=1}^\eta c_i$.
		This implies that $\Delta_{\cf}f=\frac{c}{\mu(x_1)} f(x_1)\chi_1$ and consequently $\Delta_{\cf}^\frac{1}{2}f=\sqrt{\frac{c}{\mu(x_1)}} f(x_1)\chi_1$ for $f\in L^2(\mu)$.
		Hence $\sqrt{\frac{c}{\mu(x_1)}}\chi_1 = \Delta_{\cf}^\frac{1}{2}\chi_1 = \Delta_{\cf}^\frac{1}{2}(\chi_1 + \sum_{i=1}^{\eta} \chi_{i,1})$.
		These together yield
		\begin{align*}
		\Delta_{\cf} \cf f &= \frac{c}{\mu(x_1)} f(x_1) \chi_1
		=\sqrt{\frac{c}{\mu(x_1)}} f(x_1) \Delta_{\cf}^\frac{1}{2}  \Big(  \chi_1 + \sum_{i=1}^{\eta} \chi_{i,1}\Big) \\
		&=\sqrt{\frac{c}{\mu(x_1)}} f(x_1) \Big(\Delta_{\cf}^\frac{1}{2} \cf \Big)  \chi_1 = \Delta_{\cf}^\frac{1}{2} \cf\Delta_{\cf}^\frac{1}{2}f, \quad f\in L^2(\mu).
		\end{align*}
		Therefore $\cf$ is $\Delta_{\cf}$--regular.
		
		Now, suppose $\kappa>2$.
		Assume to the contrary that $\cf$ is $\Delta_{\cf}$--regular.
		It is easily seen that
		\begin{align}
		\label{sdvf5rr}
		 \Delta_{\cf} \cf \chi_{1} = \Delta_{\cf} \chi_{2} \overset{\eqref{sdferff}}= (\h\phi (x_{2}) - 1)\chi_{2}
		\end{align}
		and
		\begin{align}
		\Delta_{\cf}^\frac{1}{2} \cf \Delta_{\cf}^\frac{1}{2} \chi_{1} &= \Delta_{\cf}^\frac{1}{2}\cf (\h\phi(x_{1}) - 1)^\frac{1}{2}\chi_{1} \notag \\ \label{sxfdsss}
		&= (\h\phi(x_{1}) - 1)^\frac{1}{2}(\h\phi (x_{2}) - 1)^\frac{1}{2}\chi_{2}.
		\end{align}
		Observe that, by \eqref{eq:3-iso-polynomial} from Theorem~\ref{th:characterization_3_isom}, $\h\phi (x_{2}) - 1 = \frac{\frac{c}{\kappa}}{\frac{c}{\kappa}(2-\kappa)+t}$ and $\h\phi (x_{1}) - 1 = \frac{\frac{c}{\kappa}}{\frac{c}{\kappa}(1-\kappa)+t}$.
		Since $\cf$ is $\Delta_{\cf}$--regular, this, \eqref{sdvf5rr} and \eqref{sxfdsss}  imply that
		\begin{align*}
		\h\phi (x_{2}) - 1 = \h\phi (x_{1}) - 1
		\end{align*}
		which yields a contradiction.
		
		Finally, if $\kappa=2$, then it is a matter of a similar verification as in the case when $\kappa>2$ to prove that $\cf$ is not $\Delta_{\cf}$--regular  (take $\chi_1$ and use $\Delta_{\cf}$--regularity to get a contradiction by showing that $\mu(x_2)<0$).
		This completes the proof.
	\end{proof}
	
	If \eqref{def:composition-operator} holds with $\kappa=1$ and $\cf\in\B[L^2(\mu)]$ is $2$--isometric, then a combination of Proposition~\ref{pro:regular} with \cite[Theorem~4.5]{A-C-J-S-2017} implies that $\cf'$ is subnormal.
	The above argument seems to be simpler than the proof of Theorem~\ref{th:subnormal}, however, this approach does not yield a direct form of the representing measure.
	
	Recall that $T\in\B$ is said to satisfy the
	kernel condition if $T^*T\Ker(T^*) \subseteq
	\Ker(T^*)$ (see \cite{A-C-J-S-2017}). Observe
	also that if $X$ is a set, $\phi\colon X\to X$
	and $f\colon X\to \C$ is a function such that $f$
	is constant on $\phi^{-1}(\{x\})$ for all $x\in
	X$, then the function $f\circ \phi^{-1}\colon
	X\to \C$, where
	\begin{align*}
	(f\circ \phi^{-1})(x):=
	\begin{cases}
	f(y) & \text{if there exists $y \in X$ such that }x=\phi(y), \\
	0 & \text{otherwise.}
	\end{cases}
	\end{align*}
	is well-defined and $(f\circ \phi^{-1})\circ\phi
	= f$. This implies that $f$ is constant on
	$\phi^{-1}(\{x\})$ for all $x\in X$ if and only
	if there exists a function $g\colon X\to \C$ such
	that $g\circ \phi =f$. The following proposition
	provides a characterization of left-invertible
	composition operators that satisfy the kernel
	condition.
	
	\begin{pro}
		\label{pro:composition-kernel-condition}
		Let $(X,\ascr,\mu)$ be a discrete measure space,
		$\phi$ be a nonsingular self-map of $X$ and
		$\cf\in\B[L^2(\mu)]$ be a composition operator.
		Then the following conditions hold{\em :}
		\begin{enumerate}
			\item $f\in\Rng(\cf)$ if and only if
			$f\colon X\to \C$ is a function such that $f$ is
			constant on $\phi^{-1}(\{x\})$ for all $x\in X$
			and $f\circ \phi^{-1}\in L^2(\mu)$,
			\item if $\cf$ left-invertible, then
			$\cf$ satisfies the kernel condition if and only
			if $\hn{}$ is constant on preimages
			$\phi^{-1}(\{x\})$ for all $x\in X$.
		\end{enumerate}
	\end{pro}
	\begin{proof}
		(i) To prove the ``if'' part, assume that
		$f\in\Rng(\cf)$. Then there exists $g \in
		L^2(\mu)$ such that $f=g\circ \phi$ and hence $f$
		is constant on $\phi^{-1}(\{x\})$ for all $x\in
		X$. Since $g \in L^2(\mu)$, $g|_{\phi(X)}=(f\circ
		\phi^{-1})|_{\phi(X)}$ and $g|_{X\setminus
			\phi(X)}=0$, we deduce that $f\circ \phi^{-1}\in
		L^2(\mu)$. It is a routine matter to show that
		the reverse implication is also true.
		
		(ii) Since $\cf$ is left-invertible, the range
		$\Rng(\cf)$ is closed. Hence, it is a direct
		consequence of the kernel-range decomposition
		that $\Ker(\cf^*)^\perp=\Rng(\cf)$. Therefore,
		$\cf$ satisfies the kernel condition if and only
		if
		\begin{align}
		\label{eq:kernel-condition} \cf^*\cf f \in
		\Rng(\cf), \quad f\in\Rng(\cf).
		\end{align}
		
		Now, suppose that $\cf$ satisfies the kernel
		condition and fix $x\in X$. Let $y,z\in X$ by
		such that $\phi(y)=\phi(z)=x$ and set
		$f=\cf\chi_{\{x\}} = \chi_{\phi^{-1}(\{x\})}$. It
		is easily seen that $f(y)=f(z)=1$. This combined
		with the fact that $\cf^*\cf f= \hn{}f$,
		\eqref{eq:kernel-condition} and (i), imply that
		$\hn{}(y)=\hn{}(z)$. Thus $\hn{}$ is constant on
		preimages $\phi^{-1}(\{x\})$ for all $x\in X$.
		
		To prove the reverse implication suppose that
		$\hn{}$ is constant on preimages
		$\phi^{-1}(\{x\})$ for all $x\in X$. Then the
		function $\hn{}\circ \phi^{-1}$ is well-defined
		and bounded. Thus, if $f\in\Rng(\cf)$, then, by
		(i), we get
		\begin{align*}
		\cf^*\cf f =\hn{}f=\big((\hn{}\circ\phi^{-1})( f
		\circ\phi^{-1})\big)\circ \phi \in \Rng(\cf),
		\quad f\in\Rng(\cf),
		\end{align*}
		which completes the proof.
	\end{proof}
	
	It is known that the Cauchy dual operators of $2$--isometries that satisfy the kernel condition are subnormal (see \cite[Theorem~3.3]{A-C-J-S-2017}).
	The following result states that if $\cf$ is a $2$--isometric composition operators on a directed graph with one circuit, then $\cf$ does not satisfy the kernel condition.
	
	\begin{thm}
		\label{th:kernel-condition}
		Suppose \eqref{def:composition-operator} holds and $\cf\in\B[L^2(\mu)]$ is a $2$--isometry.
		Then $\cf$ does not satisfy the kernel condition.
	\end{thm}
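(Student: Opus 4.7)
The plan is to apply Proposition~\ref{pro:composition-kernel-condition}, which reduces the claim to showing that $\hn{}$ fails to be constant on some preimage $\phi^{-1}(\{x\})$. Inspecting the self-map $\phi$ in \eqref{def:composition-operator}, the only vertex possessing more than one preimage is $x_\kappa$ (or $x_1$ when $\kappa=1$), and its preimage is exactly $\{x_1\}\cup\{x_{i,1}:i\in J_\eta\}$. So it suffices to produce a single $i\in J_\eta$ with $\hn{}(x_1)\neq\hn{}(x_{i,1})$.

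I would then split into two cases according to the two characterizations of $2$-isometricity already established. If $\kappa=1$, Corollary~\ref{cor:m-iso-characterization-one-element} forces $\mu(x_{i,j})=c_i$ to be independent of $j$, so \eqref{rndx151} yields $\hn{}(x_{i,1})=\mu(x_{i,2})/\mu(x_{i,1})=1$ for every $i\in J_\eta$, while $\hn{}(x_1)=1+\frac{1}{\mu(x_1)}\sum_{i\in J_\eta}\mu(x_{i,1})>1$, since $\mu(x_{i,1})>0$.

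If $\kappa\ge 2$, I would invoke the ``moreover'' part of Theorem~\ref{th:characterization_3_isom} with $d_i=0$: again $\mu(x_{i,j})=c_i$ so $\hn{}(x_{i,1})=1$, while by \eqref{eq:3-iso-polynomial} we have $\mu(x_r)=\frac{c}{\kappa}r+(t-c)$ for $r\in J_\kappa$, whence $\mu(x_2)-\mu(x_1)=\frac{c}{\kappa}>0$ and thus $\hn{}(x_1)=\mu(x_2)/\mu(x_1)\neq 1=\hn{}(x_{i,1})$. In either case the kernel condition fails.

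No real obstacle is anticipated: the work has already been done by Proposition~\ref{pro:composition-kernel-condition} and by the explicit description of the measure in Corollary~\ref{cor:m-iso-characterization-one-element} and Theorem~\ref{th:characterization_3_isom}. The only minor subtlety is remembering to handle $\kappa=1$ and $\kappa\ge 2$ separately, since the structure of the measure on the circuit is described by different results in these two regimes, but in both regimes the branches contribute the same constant value $\hn{}(x_{i,1})=1$ while the point $x_1$ strictly exceeds it.
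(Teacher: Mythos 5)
Your proposal is correct and follows essentially the same route as the paper: reduce to Proposition~\ref{pro:composition-kernel-condition}, observe that the only multi-point preimage is $\phi^{-1}(\{x_\kappa\})=\{x_1\}\cup\{x_{i,1}\colon i\in J_\eta\}$, and use Corollary~\ref{cor:m-iso-characterization-one-element} (for $\kappa=1$) and the ``moreover'' part of Theorem~\ref{th:characterization_3_isom} (for $\kappa\ge 2$) to see that $\hn{}(x_{i,1})=1$ while $\hn{}(x_1)\ne 1$ because $c=\sum_i c_i>0$. This is exactly the paper's argument, merely phrased as a direct verification rather than as a contradiction forcing $c=0$.
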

	\begin{proof}
		Suppose $\kappa=1$. Since $\cf$ is a
		$2$--isometry, it follows from
		Corollary~\ref{cor:m-iso-characterization-one-element}
		that there exists $\{c_i\}_{i=1}^\eta\subseteq(0,\infty)$ such that
		$\mu(x_{i,j})=c_i$ for $i\in J_{[1,\eta]}$
		and $j\in\nbb$. Define $c=\sum_{i=1}^\eta
		c_i$ and observe that $c<\infty$ due to
		boundedness of $\cf$. Note that $\hn{}(x_1)
		= 1 + \frac{c}{\mu(x_1)}$ and
		$\hn{}(x_{i,1})=1$ for $i\in J_{[1,\eta]}$.
		It follows from
		Proposition~\ref{pro:composition-kernel-condition}
		that $\cf$ satisfies the kernel condition
		if and only if $\hn{}(x_1)=\hn{}(x_{i,1})$
		for $i\in J_{[1,\eta]}$. The latter implies
		that $c=0$, which contradicts our
		assumptions.
		
		Now assume $\kappa>1$. It follows from
		Theorem~\ref{th:characterization_3_isom} that
		there exist $t\in(0,\infty)$, 
		$\{c_i\}_{i=1}^\eta\subseteq(0,\infty)$
		and a polynomial $w$ in one indeterminate $x$
		with real coefficients of degree at most $1$ such
		that
		$\mu(x_{i,j})=c_i$ for $i\in J_{[1,\eta]}$,
		$j\in\nbb$ and $\mu(x_i)=w(i)$ for $i\in
		J_{[1,\kappa]}$, where
		$w(x)=\frac{c}{\kappa}x-c+t$ and
		$c=\sum_{i=1}^\eta c_i <\infty$. Note that
		$\hn{}(x_{1}) = \frac{w(2)}{w(1)}$ and
		$\hn{}(x_{i,1})=1$ for $i\in J_{[1,\eta]}$.
		If $\cf$ satisfies the kernel condition,
		then it follows from
		Proposition~\ref{pro:composition-kernel-condition}
		that $\hn{}(x_{1}) = \hn{}(x_{i,1})$, for
		$i\in J_{[1,\eta]}$. Again, the last
		equality implies that $c=0$, which
		contradicts our assumptions. Hence, the
		proof is completed.
	\end{proof}

\end{document}